\newtheorem{defn}{Definition}
\newtheorem{thm}{Theorem}
\newtheorem{prop}{Proposition}
\newtheorem{lem}{Lemma}
\newtheorem{cor}{Corollary}
\newtheorem{eg}{Example}
\newtheorem{rmk}{Remark}
\newtheorem{algo}{Algorithm}
\title{On Periodicity of Continued fractions with Partial Quotients in Quadratic Number Fields}
\author{Zhaonan Wang}
\author{Yingpu Deng}
\affil{Key Laboratory of Mathematics Mechanization, NCMIS, Academy of Mathematics and Systems Science, Chinese Academy of Sciences, Beijing 100190, People's Republic of China\authorcr and\authorcr 
	School of Mathematical Sciences, University of Chinese Academy of Sciences,
	Beijing 100049, People’s Republic of China
	\authorcr
	znwang@amss.ac.cn, dengyp@amss.ac.cn}
\date{}
\begin{document}
\maketitle
\bibliographystyle{alpha}
\linespread{1.2}
\begin{abstract}
The properties of continued fractions whose partial quotients belong to a quadratic number field K are distinct from those of classical continued fractions. Unlike classical continued fractions, it is currently impossible to identify elements with periodic continued fraction expansions, akin to Lagrange's theorem. In this paper, we fix a real quadratic field K and take an ultimately periodic continued fraction with partial quotients in $\mathcal{O}_K$, and analyze the growth of $|P_n|$ and $|Q_n|$. We also establish necessary and sufficient conditions for a real quartic irrational to have an ultimately periodic continued fraction with partial quotients in $\mathcal{O}_K$. Additionally, we analyze a specific example with $K=\mathbb{Q}(\sqrt{5})$. By the obtained results, we give a continued fraction expansion algorithm for those real quartic irrationals belong to a quadratic extension of $K$, whose 
algebraic conjugates are all real, and prove that the expansion obtained using our algorithm will be ultimately periodic.
\end{abstract}
\section{Introduction}\label{sec1}
Simple continued fractions have a rich history dating back to the ancient Greeks, and it continues to be widely studied to this day due to their remarkable properties and applications, such as their relationship with quadratic irrationals and Diophantine approximation. In 1770, Lagrange introduced a well-known theorem in \cite{Lagrange}, which states that the simple continued fraction expansion of a real number is ultimately periodic if and only if it is a quadratic irrational. Also, the simple continued fraction of a given positive real number $\xi$ gives the best rational approximation of $\xi$ \cite{Lakein}. Inspired by classical continued fractions, we consider some extended forms of continued fraction expansions and attempt to investigate whether these new forms of continued fractions still possess similar desirable properties. \\
In 1977, Rosen firstly raised a problem in \cite{Rosen}: Can one devise a continued fraction that represents uniquely all real numbers, so that the ﬁnite continued fractions represent the elements of an algebraic number ﬁeld, and conversely, every element of the number ﬁeld is represented by a ﬁnite continued fraction? He defined the $\lambda$-continued fractions for $\lambda=2 \cos(\frac{\pi}{q})$ with partial quotients multiples of $\lambda$. He proved that when $\lambda=\frac{\sqrt{5}+1}{2}$ (i.e. $q$=5), the $\lambda$-continued fraction will satisfy the desired properties.\\
The $\beta$-numeration introduced by \cite{Renyi} and \cite{Parry} generates the integral-base numeration system with the non-integral $\beta$-base. Let $\beta>1$, one can expand any positive real $x $ as $x=\sum\limits_{k=-\infty}^N x_k \beta^{k}$ where $x_k\in\{0,1,\cdots,\lfloor\beta\rfloor\}$. Among all the expansions of $x $, the greatest sequence for the lexicographical order is defined to be the $\beta $-expansion of $x$. The sum consisting of non-negative powers of the expansion of $x $ is called the $\beta $-integral part of $x $, denoted by $[x]_\beta$. Also they defined the (positive) $\beta$-integers by those positive elements $x $ such that $x=[x]_\beta$, denoted by $\mathbb{Z}_\beta^+$.\\
Based on the above definitions, it is natural to consider continued fraction expansions with partial quotients being $\beta$-integers. In 2006, Bernat defined such $\beta$-continued fraction expansion algorithm with $\beta=\frac{\sqrt{5}+1}{2}$ being the golden ratio in \cite{Bernat}, and he showed that any element in $\mathbb{Q}(\beta)$ can be represented in finitely. In \cite{2020}, Zuzana Mas{\'a}kov{\'a} et.al. used a different approach from Bernat to consider more general continued fractions with partial quotients belonging to some discrete subset of the ring of algebraic integers of a real quadratic field $K $, hence Bernat's result is a special case when the discrete subset is chosen to be $\mathbb{Z}_\beta^+$. The authors showed that for all quadratic Perron numbers $\beta$, every element of $\mathbb{Q}(\beta)$ has a periodic $\beta$-continued fraction expansion, and there exist four quadratic Perron numbers including $\beta=\frac{\sqrt{5}+1}{2}$ such that every element of $\mathbb{Q}(\beta)$ has a finite $\beta$-continued fraction expansion. Moreover, they have also proved that for any quadratic real field K, if an element $\alpha\in K$ has a continued fraction expansion with all partial quotients $a_i\in\mathcal{O}_K\cap\mathbb{R}_{\geq1}$ such that $|a_n^\prime|\leq a_n$, where $a_n^\prime$ is the conjugate of $a_n$ in K, then this expansion is finite or ultimately periodic. However, the $\beta$-continued fraction expansion, as defined, fails to preserve the properties, such as the Lagrange theorem. Specifically, the expansion of quartic irrationals in some quadratic extension of $\mathbb{Q}(\beta)$ may not be ultimately periodic.\\
In this paper, we assume that $K$ is an arbitrary quadratic real field with a ring of algebraic integers $\mathcal{O}_K$ and all partial quotients belong to $\mathcal{O}_K$. In contrast to classic continued fractions and $\beta$-continued fractions, we do not impose a constraint on $a_n$ to be greater than 1. It is worth noting that $a_n$ may also take negative values.\\
The paper is structured as follows: Section \ref{sec2} provides an overview of the properties of continued fractions and multiplicative Weil heights. In Section \ref{sec3}, we present some results on convergence of an ultimately periodic continued fraction expansion with partial quotients in $\mathcal{O}_K$, we also establish the necessary and sufficient condition for the periodicity of a continued fraction expansion converging to some element $\xi$, especially when $\xi$ is a real quartic number. Section \ref{sec4} presents an explicit example with $K=\mathbb{Q}(\sqrt{5})$, where we offer an algorithm to ensure that any real quartic irrational in a quadratic extension of $K$ whose conjugates are all real possesses an ultimately periodic continued fraction expansion with partial quotients in $\mathcal{O}_K$.
 
\section{Preliminaries}\label{sec2}
\subsection{Continued fractions}
Let $\{a_n\}$ be a sequence of real numbers, $a_i \neq 0$ for all $i>0$. Define sequences $\{P_n\}$ and $\{Q_n\}$ as the $\mathcal{Q}$-pair associated to $\{a_n\}$ recursively as follows:
$$P_{-1}=1,\ P_0=a_0,\ P_{n+1}=a_{n+1}P_n+P_{n-1},\ (n\geq0)$$
$$Q_{-1}=0,\ Q_0=1,\ Q_{n+1}=a_{n+1}Q_n+Q_{n-1}.\ (n\geq0)$$
Then we have $$[a_0,a_1,\cdots,a_n]:=\dfrac{P_n}{Q_n}=a_0+\dfrac{1}{a_1+\dfrac{1}{\cdots+\dfrac{1}{a_n}}}.$$
If $\{a_n\}$ is an infinite sequence with $a_n>0$ for all $n\geq 1$, then $\dfrac{P_n}{Q_n}$ converges if and only if $\sum\limits_{i=0}^{\infty} a_i$ approaches infinity, which is equivalent to the condition $\lim\limits_{n\to +\infty} Q_n=+\infty$ \cite{khinchin}, and we can write $[a_0,a_1,a_2,\cdots]$ for the limit $\lim\limits_{n\to+\infty} \dfrac{P_n}{Q_n}$.  If the fraction is convergent, denote $\xi:=[a_0,a_1,a_2,\cdots]$, $\{a_n\}$ is called the sequence of partial quotients of $\xi$, and $a_i$ is the $i$-th partial quotients of $\xi$.
Define the complete quotient $\xi_n=[a_n,a_{n+1},\cdots]$, we have
$$\xi=a_0+\dfrac{1}{a_1+\dfrac{1}{\cdots+\dfrac{1}{a_{n}+\dfrac{1}{\xi_{n+1}}}}}=\dfrac{\xi_{n+1} P_{n}+P_{n-1}}{\xi_{n+1} Q_{n}+Q_{n-1}}.$$
Hence we obtain 
$$\xi_{n+1}=-\dfrac{\xi Q_{n-1}-P_{n-1}}{\xi Q_n-P_n},$$
$$\xi-\dfrac{P_n}{Q_n}=\dfrac{\xi_{n+1} P_{n}+P_{n-1}}{\xi_{n+1} P_{n}+P_{n-1}}-\dfrac{P_n}{Q_n}=\dfrac{(-1)^n}{Q_n (\xi_{n+1} Q_n+Q_{n-1})}\ .$$
\begin{prop}\label{prop1}
For all $n\geq0$ we have the following properties:
\begin{enumerate}
    \item $P_{n-1} Q_n-P_n Q_{n-1}=(-1)^n$,
    \item $|\xi-\dfrac{P_n}{Q_n}|=\dfrac{1}{Q_n (\xi_{n+1} Q_n+Q_{n-1})}$,
    \item $|Q_n\xi-P_n|=|\xi_1\cdots\xi_{n+1}|^{-1}$ for all $n\geq0$.
\end{enumerate}
\end{prop}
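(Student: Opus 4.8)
The plan is to treat the three identities in order, since each builds on the one before it. For the first, I would argue by induction on $n$. The base case $n=0$ is immediate from the initial data $P_{-1}=1$, $P_0=a_0$, $Q_{-1}=0$, $Q_0=1$, which give $P_{-1}Q_0-P_0Q_{-1}=1=(-1)^0$. For the inductive step I would substitute the defining recurrences $P_{n+1}=a_{n+1}P_n+P_{n-1}$ and $Q_{n+1}=a_{n+1}Q_n+Q_{n-1}$ into the expression $P_nQ_{n+1}-P_{n+1}Q_n$; the two terms carrying the factor $a_{n+1}$ cancel, leaving exactly $-(P_{n-1}Q_n-P_nQ_{n-1})=-(-1)^n=(-1)^{n+1}$, which closes the induction. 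This identity is purely formal and uses only the recurrences, so it is unaffected by the sign issues that arise once the $a_n$ are allowed to be negative elements of $\mathcal{O}_K$.

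For the second identity I would invoke the complete-quotient representation $\xi=\frac{\xi_{n+1}P_n+P_{n-1}}{\xi_{n+1}Q_n+Q_{n-1}}$ already recorded in the preliminaries. Subtracting $P_n/Q_n$ and putting the difference over the common denominator $Q_n(\xi_{n+1}Q_n+Q_{n-1})$, the cross terms $\xi_{n+1}P_nQ_n$ cancel in the numerator, which collapses to $P_{n-1}Q_n-P_nQ_{n-1}=(-1)^n$ by the first part. Hence $\xi-\frac{P_n}{Q_n}=\frac{(-1)^n}{Q_n(\xi_{n+1}Q_n+Q_{n-1})}$, and taking absolute values gives the stated formula.

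For the third identity I would again use the relation $\xi_{n+1}=-\frac{\xi Q_{n-1}-P_{n-1}}{\xi Q_n-P_n}$ from the preliminaries, which rearranges into the one-step recurrence $|\,\xi Q_n-P_n\,|=|\,\xi Q_{n-1}-P_{n-1}\,|\,/\,|\xi_{n+1}|$. The base case $n=0$ follows from $\xi-a_0=1/\xi_1$, giving $|Q_0\xi-P_0|=|\xi-a_0|=|\xi_1|^{-1}$; the recurrence then divides by one more $|\xi_{n+1}|$ at each stage, and telescoping produces $|Q_n\xi-P_n|=|\xi_1\cdots\xi_{n+1}|^{-1}$.

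None of these steps is a genuine obstacle — they are the classical continued-fraction identities — so the work that demands care is bookkeeping rather than ideas. Specifically, because the partial quotients lie in $\mathcal{O}_K$ and may be negative, $Q_n$ and the complete quotients $\xi_{n+1}$ need not be positive, so I would retain absolute values throughout instead of leaning on the positivity used in the classical setting. I would also check that no denominator vanishes: $Q_n\neq0$, and $\xi_{n+1}Q_n+Q_{n-1}\neq0$ and $\xi Q_n-P_n\neq0$ both hold because $\xi$ lies outside $K$, so $\xi Q_n\neq P_n$ whenever $Q_n\neq0$.
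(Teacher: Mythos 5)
Your proposal is correct, and it is precisely the classical argument: the paper gives no proof of its own for this proposition, deferring instead to the first chapter of \cite{khinchin}, where the same induction for $P_{n-1}Q_n-P_nQ_{n-1}=(-1)^n$, the same cancellation via the complete-quotient formula for part 2, and the same telescoping of $\xi_{n+1}=-\frac{\xi Q_{n-1}-P_{n-1}}{\xi Q_n-P_n}$ for part 3 appear. Your added care about retaining absolute values and checking that denominators do not vanish is appropriate, since the paper later applies these identities with partial quotients in $\mathcal{O}_K$ that need not be positive.
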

For proof one can see the first chapter in \cite{khinchin}. Note that if all $a_n\geq1$ for all $n>0$, then $Q_{n+1}>Q_{n}>0$ and $a_{n+1}+1\geq\xi_{n+1}\geq a_{n+1}$ for all $n\geq0$. Specifically, $\dfrac{1}{Q_n\left(Q_{n+1}+Q_n\right)} \leq\left|\xi-\frac{P_n}{Q_n}\right|=\dfrac{1}{Q_n\left(\xi_{n+1} Q_n+Q_{n-1}\right)} \leq \dfrac{1}{Q_n Q_{n+1}} \leq \dfrac{1}{a_{n+1} Q_n^2}$.\\
Now let us examine $P_n$ and $Q_n$ from the perspective of so-called continuants.\\
Define the multivariate polynomials by the recurrence
$$K_{-1}=0,\ K_0=1,\ K_n(t_1,\cdots,t_n)=t_n K_{n-1}(t_1,\cdots,t_{n-1})+K_{n-2}(t_1,\cdots,t_{n-2}).$$ 
It is evident, based on the provided definition, that $K_n$ is a polynomial in $t_1,\cdots,t_n$ featuring positive integer coefficients. Furthermore, each $t_i $ appears in at least one one monomial with a non-zero coefficient.\\
Therefore, we can represent $P_n$ and $Q_n$ as $P_n=K_{n+1}(a_0,\cdots,a_n)$ and $Q_n=K_n(a_1,\cdots,a_n)$. \\
The continuants posses further remarkable properties, and we recommend readers who are interested in learning more to refer to \cite{Graham}.\\
Here we illustrate a proposition that is relevant to this paper,the proof of which can be obtained by induction.
\begin{prop}\label{prop2}
    The polynomials $K_n(t_1,\cdots,t_n)$ defined above satisfy the following properties:
    \begin{enumerate}
        \item $K_n(t_1,\cdots,t_n)=K_n(t_n,\cdots,t_1)$.
        \item For all $j,l\geq1$, we have
$$K_{j+l}\left(t_1, \ldots, t_{j+l}\right)=K_j\left(t_1, \ldots, t_j\right) K_l\left(t_{j+1}, \ldots, t_{j+l}\right)+K_{j-1}\left(t_1, \ldots, t_{j-1}\right) K_{l-1}\left(t_{j+2}, \ldots, t_{j+l}\right).$$
    \end{enumerate}
\end{prop}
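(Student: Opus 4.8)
The plan is to treat both parts as purely algebraic identities in the polynomial ring $\mathbb{Z}[t_1,\ldots,t_n]$ and to establish them by induction using only the defining ``tail'' recurrence $K_n(t_1,\ldots,t_n)=t_n K_{n-1}(t_1,\ldots,t_{n-1})+K_{n-2}(t_1,\ldots,t_{n-2})$. The conceptual engine behind both statements is the matrix identity
$$\begin{pmatrix} t_1 & 1 \\ 1 & 0 \end{pmatrix}\cdots\begin{pmatrix} t_n & 1 \\ 1 & 0 \end{pmatrix}=\begin{pmatrix} K_n(t_1,\ldots,t_n) & K_{n-1}(t_1,\ldots,t_{n-1}) \\ K_{n-1}(t_2,\ldots,t_n) & K_{n-2}(t_2,\ldots,t_{n-1}) \end{pmatrix},$$
from which part (1) follows by transposing (each factor is symmetric, so the reversed product is the transpose) and part (2) follows by splitting the product after the $j$-th factor and reading off the top-left entry. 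I will present the self-contained inductive arguments but keep this matrix picture in mind to guide the bookkeeping.

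For part (1), the crucial preliminary step is to prove the ``head'' recurrence $K_n(t_1,\ldots,t_n)=t_1 K_{n-1}(t_2,\ldots,t_n)+K_{n-2}(t_3,\ldots,t_n)$, which lets one peel off the \emph{first} variable rather than the last. I would prove this by induction on $n$: expand the left side by the defining tail recurrence, apply the inductive head-recurrence to the two resulting lower continuants $K_{n-1}(t_1,\ldots,t_{n-1})$ and $K_{n-2}(t_1,\ldots,t_{n-2})$, then regroup the terms containing $t_1$ and recognize the two grouped expressions as tail recurrences for $K_{n-1}(t_2,\ldots,t_n)$ and $K_{n-2}(t_3,\ldots,t_n)$. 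Once both recurrences are available, the reversal symmetry $K_n(t_1,\ldots,t_n)=K_n(t_n,\ldots,t_1)$ is immediate by induction: expand the left side by the head recurrence in $t_1$, expand $K_n(t_n,\ldots,t_1)$ by the tail recurrence (whose last argument is again $t_1$), and apply the inductive hypothesis to identify $K_{n-1}(t_2,\ldots,t_n)=K_{n-1}(t_n,\ldots,t_2)$ and $K_{n-2}(t_3,\ldots,t_n)=K_{n-2}(t_n,\ldots,t_3)$.

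For part (2), I would fix $j$ and induct on $l$. The base case $l=1$ is exactly the tail recurrence, since $K_1(t_{j+1})=t_{j+1}$ and $K_0=1$ collapse the claimed formula to $K_{j+1}=t_{j+1}K_j+K_{j-1}$. For the inductive step I expand $K_{j+l+1}(t_1,\ldots,t_{j+l+1})$ by the tail recurrence, apply the inductive hypothesis to the two pieces $K_{j+l}$ and $K_{j+l-1}$ (splitting each at position $j$), and then collect the coefficients of $K_j(t_1,\ldots,t_j)$ and of $K_{j-1}(t_1,\ldots,t_{j-1})$. Each collected coefficient is, by the tail recurrence, exactly $K_{l+1}(t_{j+1},\ldots,t_{j+l+1})$ and $K_l(t_{j+2},\ldots,t_{j+l+1})$ respectively, which is the desired formula with $l$ advanced by one.

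The main obstacle is not conceptual but is the careful handling of degenerate index ranges: verifying that the empty-argument and near-empty cases behave correctly, namely $K_{-1}=0$ and $K_0=1$ evaluated on empty tuples, and checking the small base cases ($n\le 2$ for part (1), $l=1$ for part (2)) so that every regrouping above is literally an instance of a defining recurrence rather than a boundary coincidence. Establishing the head recurrence first is what makes the reversal proof short; attempting to prove part (1) directly from the tail recurrence alone stalls because expanding the reversed continuant introduces $t_1$ as the leading coefficient, which does not match the $t_n$-leading expansion of the forward continuant.
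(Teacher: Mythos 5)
Your proof is correct and takes the same route as the paper, which gives no argument beyond stating that the proposition ``can be obtained by induction'' (deferring further details to the Graham--Knuth--Patashnik reference). Your inductive scheme --- first establishing the head recurrence $K_n(t_1,\ldots,t_n)=t_1K_{n-1}(t_2,\ldots,t_n)+K_{n-2}(t_3,\ldots,t_n)$ to prove the reversal symmetry in part (1), then fixing $j$ and inducting on $l$ for part (2) with the tail recurrence as the base case --- is a valid and complete filling-in of exactly that induction, including the correct handling of the degenerate conventions $K_{-1}=0$, $K_0=1$.
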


\subsection{Weil heights of algebraic numbers}
In this paper, we provide definitions and properties of the Weil height that are pertinent to our research, and we omit the corresponding proofs. Interested readers may refer to the first two chapters of \cite{WeilHight} for a detailed exposition.
\begin{defn}\label{defn1}
Let K be an algebraic number field, let $\mathcal{M}_K=\{\ |\cdot |_v: v\ is\ a\ place\ of\ K\}$ be a collection of normalized absolute values on K (i.e. the Product formula $\prod\limits_{v\in\mathcal{M}_K}|x|_v=1$ holds for all nonzero elements $x\in K$). The (multiplicative) Weil height of x is defined by
$$H(x)=\prod\limits_{v\in\mathcal{M}_K}\max\{|x|_v,1\}.$$
$\rm{(}$ Alternatively one can define the logarithmic Weil height $h(x)=\sum\limits_{v\in\mathcal{M}_K}\log^+|x|_v$, and $H(x)=e^{h(x)}$. $\rm{)}$
\end{defn}
In practice, let $u$ be a place of $\mathbb{Q}$ and $v$ a place of K where $v$ extends $u$ in K. Let $||\cdot||_u$ be the normalized absolute value in $\mathbb{Q}$, we can find the absolute value $||\cdot||_v$ in the place $v$ such that $||\cdot||_v$ equals $||\cdot||_u$ when restricting to $\mathbb{Q}$. Now define another absolute value attached to $v$ for all $v$: $|\cdot|_v=||\cdot||_v^{n_v/n}$, where $n=[K:\mathbb{Q}]$, $n_v=[K_v:\mathbb{Q}_v]$, and $K_v$ (resp. $\mathbb{Q}_v$) is the completion of K (resp. $\mathbb{Q}$) with respect to the place $v$, then $\mathcal{M}_K$ can be defined as the collection of all $|\cdot|_v$.
\begin{rmk}\label{rmk1}
    There are only finitely many factors in the infinite product of $H(x)$ that do not equal 1. Also, the Weil height of $x$ is independent from the field we choose that contains $x$.
\end{rmk}
Next we introduce some useful properties of the Weil height.
\begin{prop}\label{prop3}
Let $ r / s $ be a rational number, $\zeta$ a root of unity, and let $ \alpha \neq 0  $ and $ \beta \neq 0 $ be elements of $ \overline{\mathbb{Q}}^{\times} $. Then
    \begin{enumerate}
\item $H(\alpha\pm \beta)\leq 2 H(\alpha) H(\beta)$,
\item $H(\alpha \beta) \leq H(\alpha) H(\beta)$,
\item $H(\alpha^{r / s})=|r / s|_{\infty} H(\alpha)$, 
\item $H(\sigma(\alpha))=H(\alpha)$ for all $\sigma\in\rm{Gal
}(\overline{\mathbb{Q}}/\mathbb{Q})$,
\item $H(\alpha)=1 $ if and only if $ \alpha$ is a root of unity.
    \end{enumerate}
\end{prop}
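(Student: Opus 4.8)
The plan is to prove all five assertions directly from the adelic definition of $H$ in Definition \ref{defn1}, working one place at a time and then taking the product over $\mathcal{M}_K$ for a suitable number field $K$; for (4) and (5) I would take $K$ Galois over $\mathbb{Q}$ and large enough to contain every element in sight. By Remark \ref{rmk1} the value of $H$ is independent of this choice, so enlarging $K$ is harmless. The recurring local inputs are the elementary inequality $\max\{ab,1\}\le\max\{a,1\}\max\{b,1\}$ for $a,b\ge 0$, together with the two shapes of the triangle inequality: the ultrametric bound $|x+y|_v\le\max\{|x|_v,|y|_v\}$ at finite $v$, and, at archimedean $v$, the bound $|x+y|_v\le 2^{n_v/n}\max\{|x|_v,|y|_v\}$ obtained by applying the ordinary triangle inequality to the unnormalized $\|\cdot\|_v$ and then raising to the power $n_v/n$.

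For (2), at every place $|\alpha\beta|_v=|\alpha|_v|\beta|_v$, so $\max\{|\alpha\beta|_v,1\}\le\max\{|\alpha|_v,1\}\max\{|\beta|_v,1\}$; multiplying over $v\in\mathcal{M}_K$ gives the claim. For (1), I would combine the ultrametric inequality at the finite places with the archimedean bound above; the product of the constants $2^{n_v/n}$ over the archimedean places is exactly $2^{\sum_{v\mid\infty} n_v/n}=2$, since $\sum_{v\mid\infty}n_v=n$, and this produces the factor $2$. For (3), I would first treat an integer exponent $m$: from $|\alpha^m|_v=|\alpha|_v^{m}$ one gets $H(\alpha^m)=H(\alpha)^{m}$ for $m\ge 0$, while the product formula yields $H(\alpha^{-1})=H(\alpha)$ (using $\max\{|\alpha|_v^{-1},1\}=\max\{|\alpha|_v,1\}/|\alpha|_v$), so $H(\alpha^{m})=H(\alpha)^{|m|}$ for all $m\neq 0$. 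Writing $\gamma=\alpha^{r/s}$ for a fixed $s$-th root, one has $\gamma^{s}=\alpha^{r}$, hence $H(\gamma)^{|s|}=H(\alpha)^{|r|}$ and $H(\alpha^{r/s})=H(\alpha)^{|r/s|}$; this is the assertion, read multiplicatively, equivalently $h(\alpha^{r/s})=|r/s|\,h(\alpha)$ for the logarithmic height. The height is independent of which $s$-th root one picks, by (4).

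Statement (4) is bookkeeping on places. Taking $K/\mathbb{Q}$ Galois and $\sigma\in\mathrm{Gal}(K/\mathbb{Q})$, the rule $|x|_{\sigma v}:=|\sigma^{-1}x|_v$ permutes $\mathcal{M}_K$, sends places above a fixed place of $\mathbb{Q}$ to places above the same one, and preserves the local degrees $n_v$ (all equal in the Galois case); hence the normalization is unchanged and $H(\sigma\alpha)=\prod_v\max\{|\alpha|_{\sigma^{-1}v},1\}=H(\alpha)$ after reindexing the product.

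Statement (5) is the substantive one, and I expect the reverse implication, which is Kronecker's theorem, to be the main obstacle. The forward direction is immediate: if $\alpha^m=1$ then $H(\alpha)^m=H(\alpha^m)=H(1)=1$ by (3), so $H(\alpha)=1$. For the converse, suppose $H(\alpha)=1$. Each factor $\max\{|\alpha|_v,1\}\ge 1$, so the product equalling $1$ forces $|\alpha|_v\le 1$ for every $v$: the finite places show $\alpha$ is an algebraic integer, and the archimedean places show that every conjugate of $\alpha$ has absolute value $\le 1$. The same holds for every power $\alpha^k$, since $H(\alpha^k)=1$. Thus each $\alpha^k$ is an algebraic integer of degree at most $d=[\mathbb{Q}(\alpha):\mathbb{Q}]$ all of whose conjugates lie in the closed unit disc, so the coefficients of its characteristic polynomial over $\mathbb{Z}$ -- being, up to sign, elementary symmetric functions of those conjugates -- are bounded in absolute value by the binomial coefficients $\binom{d}{j}$. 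There are only finitely many integer polynomials of bounded degree and bounded coefficients, hence only finitely many distinct values among $\alpha,\alpha^2,\alpha^3,\dots$; therefore $\alpha^{i}=\alpha^{j}$ for some $i>j$, and since $\alpha\neq 0$ this gives $\alpha^{\,i-j}=1$, i.e.\ $\alpha$ is a root of unity. The delicate point throughout is the uniform boundedness of the symmetric functions, which is precisely what converts ``all conjugates in the unit disc'' into the finiteness statement that drives the argument.
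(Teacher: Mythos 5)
Your proof is correct, but there is nothing in the paper to compare it against: the paper explicitly omits the proofs of these height properties, saying only that interested readers may consult the first two chapters of \cite{WeilHight}. What you have written is essentially the standard textbook argument found there --- place-by-place estimates with the normalized absolute values $|\cdot|_v = \|\cdot\|_v^{n_v/n}$, the count $\sum_{v\mid\infty} n_v = n$ of archimedean local degrees producing the factor $2$ in (1), the product formula giving $H(\alpha^{-1})=H(\alpha)$ in (3), re-indexing the places under the Galois action for (4), and Kronecker's pigeonhole argument for (5) --- and it is complete, including the one genuinely nontrivial step (the converse in (5)). Two small remarks. First, the paper's statement of (3) is garbled: as printed, $H(\alpha^{r/s})=|r/s|_\infty H(\alpha)$ is the logarithmic-height identity $h(\alpha^{r/s})=|r/s|\,h(\alpha)$ transplanted incorrectly to the multiplicative height; the correct multiplicative form, which is what you actually prove, is $H(\alpha^{r/s})=H(\alpha)^{|r/s|}$, so your silent correction is the right reading. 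Second, your appeal to (4) for independence of the choice of $s$-th root is unnecessary and not quite right as stated: two $s$-th roots of $\alpha^r$ differ by a root of unity but need not be conjugate over $\mathbb{Q}$ (e.g.\ $2$ and $-2$ as square roots of $4$). This is harmless, however, since your computation $H(\gamma)^{|s|}=H(\alpha)^{|r|}$ applies verbatim to every $s$-th root $\gamma$, so the independence is already built into the argument; alternatively it follows from (2) together with $H(\zeta)=1$ for roots of unity $\zeta$.
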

\begin{prop}\label{prop4}
$(\mathbf{Northcott^\prime s\ theorem\ on\ heights})$ The set $\{\alpha\in \overline{\mathbb{Q}} \mid H(\alpha) \leq C,\ [\mathbb{Q}(\alpha): \mathbb{Q}] \leq d\}$ is finite for any positive constants $C,d$.
\end{prop}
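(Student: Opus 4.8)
The plan is to turn the height bound into a bound on the integer coefficients of the minimal polynomial of $\alpha$, and then to invoke the elementary fact that there are only finitely many integer polynomials of bounded degree with bounded coefficients.

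First I would fix an $\alpha$ in the set, set $n = [\mathbb{Q}(\alpha):\mathbb{Q}] \le d$, and let
$$f(X) = a_n X^n + \cdots + a_0 = a_n \prod_{i=1}^{n} (X - \alpha_i) \in \mathbb{Z}[X], \qquad a_n > 0,\ \gcd(a_0,\ldots,a_n) = 1,$$
be its minimal polynomial, where $\alpha = \alpha_1, \ldots, \alpha_n$ are the conjugates of $\alpha$. The key identity to establish is that the absolute Weil height of Definition \ref{defn1} coincides with the normalized Mahler measure of $f$:
$$H(\alpha)^n = M(f) := a_n \prod_{i=1}^{n} \max\{1, |\alpha_i|\},$$
where $|\cdot|$ denotes the ordinary modulus on $\mathbb{C}$. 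This is obtained by grouping the product over $\mathcal{M}_K$ according to the places of $\mathbb{Q}$: using Galois invariance (Proposition \ref{prop3}(4)) the archimedean factors assemble into $\prod_i \max\{1,|\alpha_i|\}$, while a content (Gauss-lemma) computation at the finite places, where primitivity $\gcd = 1$ is essential, contributes exactly the leading coefficient $a_n$.

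Next I would bound the coefficients. Writing $a_{n-k} = (-1)^k a_n\, e_k(\alpha_1,\ldots,\alpha_n)$ with $e_k$ the $k$-th elementary symmetric polynomial and estimating its $\binom{n}{k}$ terms trivially, the triangle inequality yields
$$|a_{n-k}| \le a_n \binom{n}{k} \prod_{i=1}^{n} \max\{1,|\alpha_i|\} = \binom{n}{k}\, M(f) = \binom{n}{k}\, H(\alpha)^n.$$
Since every factor in $H(\alpha) = \prod_v \max\{|\alpha|_v, 1\}$ is at least $1$, one has $H(\alpha) \ge 1$; hence the hypothesis $H(\alpha) \le C$ forces $C \ge 1$ and $H(\alpha)^n \le C^n \le C^d$. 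Combining with $\binom{n}{k} \le 2^d$ gives the uniform bound $|a_j| \le 2^d C^d$ on every coefficient, depending only on $C$ and $d$. To conclude, there are only finitely many polynomials in $\mathbb{Z}[X]$ of degree at most $d$ whose coefficients are all bounded in modulus by $2^d C^d$, and each such polynomial has at most $d$ roots in $\overline{\mathbb{Q}}$. Every $\alpha$ in the prescribed set is a root of its own minimal polynomial, which is one of these finitely many polynomials, so the set is contained in a finite union of finite sets and is therefore finite.

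The step I expect to demand the most care is the identity $H(\alpha)^n = M(f)$: unwinding the normalized valuations $|\cdot|_v = \|\cdot\|_v^{n_v/n}$ and verifying that the non-archimedean places contribute precisely $a_n^{1/n}$ to $H(\alpha)$ — equivalently $\prod_{v \nmid \infty}\max\{|\alpha|_v,1\} = a_n^{1/n}$ — is where the product formula and the primitivity of $f$ enter in a genuinely arithmetic way. Everything downstream is elementary estimation. An alternative route avoiding Mahler measure would bound the house $\max_i |\alpha_i|$ and the denominator of $\alpha$ separately straight from the defining product, but the bookkeeping is of the same nature.
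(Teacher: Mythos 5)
The paper offers no proof of this proposition at all: Northcott's theorem is quoted as a known result, with the reader referred to \cite{WeilHight} for the proofs, so there is no internal argument to compare yours against. Your proof is the standard one and it is correct: the Mahler-measure identity $H(\alpha)^n = a_n\prod_{i=1}^{n}\max\{1,|\alpha_i|\}$, the elementary symmetric function estimate $|a_{n-k}|\le a_n\binom{n}{k}\prod_i\max\{1,|\alpha_i|\}=\binom{n}{k}H(\alpha)^n$, the observation that every factor of the defining product is at least $1$ so $H(\alpha)\ge 1$ (hence either the set is empty, or $C\ge 1$ and $H(\alpha)^n\le C^d$), and finally the finiteness of integer polynomials of degree at most $d$ with coefficients bounded by $2^dC^d$, each having at most $d$ roots. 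One economy worth pointing out: the identity you single out as the delicate step is precisely Proposition \ref{prop5} of the paper (also quoted there without proof), so within the logic of this paper you could simply invoke it and skip the Gauss-lemma/content discussion at the finite places; what remains is then pure elementary estimation. Your sketch of that identity is nonetheless accurate --- the archimedean places of $\mathbb{Q}(\alpha)$ assemble, under the normalization $|\cdot|_v=\|\cdot\|_v^{n_v/n}$, into $\bigl(\prod_i\max\{1,|\alpha_i|\}\bigr)^{1/n}$, and the non-archimedean places contribute $a_n^{1/n}$ by primitivity of the minimal polynomial --- so the proof stands on its own even without citing Proposition \ref{prop5}.
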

\begin{prop}\label{prop5}
Let $\alpha\in\overline{\mathbb{Q}}$, $n=[\mathbb{Q}(\alpha):\mathbb{Q}]$ and d be the leading coefficient of the minimal polynomial of $\alpha$ over $\mathbb{Z}$. Then
$$H(\alpha)^n=|d|\prod\limits_{\sigma} \max \{1,|\sigma(\alpha)|\},$$
where $\sigma$ runs through all embeddings of $\mathbb{Q}(\alpha)\hookrightarrow \mathbb{C}$. 
\end{prop}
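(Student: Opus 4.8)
The plan is to separate the product defining $H(\alpha)$ over $K=\mathbb{Q}(\alpha)$ into its archimedean and non-archimedean parts and evaluate each in turn, using the normalization $|\cdot|_v=||\cdot||_v^{\,n_v/n}$ from Definition \ref{defn1}. Write the minimal polynomial as $f(X)=d\prod_{i=1}^{n}(X-\alpha_i)\in\mathbb{Z}[X]$, primitive (its coefficients have gcd $1$) with leading coefficient $d$, where the $\alpha_i=\sigma_i(\alpha)$ are the $n$ conjugates of $\alpha$, i.e. the images under the embeddings $K\hookrightarrow\mathbb{C}$. The target identity will follow once I show that the archimedean part, raised to the power $n$, equals $\prod_i\max\{1,|\alpha_i|\}$ and that the non-archimedean part equals $|d|^{1/n}$.

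First I would handle the archimedean places. Each such place $v$ arises either from a real embedding (in which case $n_v=1$) or from a conjugate pair of complex embeddings (in which case $n_v=2$), and in both situations $||\alpha||_v$ is the ordinary complex modulus $|\sigma(\alpha)|$ of the associated embedding. Since $|\sigma(\alpha)|=|\bar\sigma(\alpha)|$, the factor $\max\{1,|\sigma(\alpha)|\}^{2/n}$ coming from a complex place is exactly the combined contribution of $\sigma$ and $\bar\sigma$ each taken with exponent $1/n$. Collecting all terms I expect
$$\prod_{v\mid\infty}\max\{|\alpha|_v,1\}=\Big(\prod_{i=1}^{n}\max\{1,|\alpha_i|\}\Big)^{1/n}.$$

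For the finite places I would fix a prime $p$, fix an embedding $\overline{\mathbb{Q}}\hookrightarrow\overline{\mathbb{Q}_p}$, and extend $|\cdot|_p$ to $\overline{\mathbb{Q}_p}$. The places $v\mid p$ of $K$ correspond to the $\mathrm{Gal}(\overline{\mathbb{Q}_p}/\mathbb{Q}_p)$-orbits of the embeddings $\tau\colon K\hookrightarrow\overline{\mathbb{Q}_p}$; the orbit attached to $v$ has size $n_v=[K_v:\mathbb{Q}_p]$, the sizes satisfy $\sum_{v\mid p}n_v=n$, and $||\alpha||_v=|\tau(\alpha)|_p$ for any $\tau$ in that orbit (the Galois action preserves $|\cdot|_p$). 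Hence the product over $v\mid p$, with each factor weighted by $n_v$, reassembles the full multiset of roots $\{\alpha_i\}$ of $f$ inside $\overline{\mathbb{Q}_p}$, giving
$$\prod_{v\mid p}\max\{|\alpha|_v,1\}=\Big(\prod_{i=1}^{n}\max\{1,|\alpha_i|_p\}\Big)^{1/n}.$$
The key identity is then $\prod_{i}\max\{1,|\alpha_i|_p\}=|d|_p^{-1}$, which I would derive from Gauss's lemma: the Gauss norm (the maximal $p$-adic absolute value of the coefficients) is multiplicative, each linear factor $X-\beta$ has Gauss norm $\max\{1,|\beta|_p\}$ and the constant $d$ has Gauss norm $|d|_p$, so the Gauss norm of $f$ equals $|d|_p\prod_i\max\{1,|\alpha_i|_p\}$; primitivity of $f$ forces this Gauss norm to be $1$.

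Finally I would multiply over all primes $p$ and invoke the product formula for the rational number $d$, namely $\prod_p|d|_p=|d|^{-1}$, to conclude
$$\prod_{v\nmid\infty}\max\{|\alpha|_v,1\}=\Big(\prod_p|d|_p^{-1}\Big)^{1/n}=|d|^{1/n}.$$
Combining this with the archimedean contribution and raising to the $n$-th power yields $H(\alpha)^n=|d|\prod_i\max\{1,|\alpha_i|\}=|d|\prod_\sigma\max\{1,|\sigma(\alpha)|\}$, as claimed. The main obstacle is the non-archimedean step: one must correctly match the places $v\mid p$ to the Galois orbits of $p$-adic embeddings so that the $n_v$-weighting exactly recovers all $n$ roots, and then deploy the multiplicativity of the Gauss norm together with the primitivity of $f$ to pin the entire finite part down to $|d|$.
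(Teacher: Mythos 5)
Your proof is correct: the place-by-place decomposition with the normalization $|\cdot|_v=||\cdot||_v^{n_v/n}$, the matching of $p$-adic places to Galois orbits of embeddings, the Gauss-norm multiplicativity argument forcing $\prod_i\max\{1,|\alpha_i|_p\}=|d|_p^{-1}$ by primitivity, and the product formula for $d$ all go through without gaps. Note that the paper itself gives no proof of this proposition (it explicitly omits proofs of the height preliminaries and defers to the cited reference of Bombieri and Gubler), and your argument is essentially the standard one found there, identifying $H(\alpha)^n$ with the Mahler measure of the minimal polynomial.
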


\section{Continued fractions with partial quotients in quadratic fields}\label{sec3}
In this section, we assume K is a real quadratic number field with the ring of algebraic integers $\mathcal{O}_K$, and 
$[a_0,\cdots,a_N,\overline{a_{N+1},\cdots,a_{N+k}}]$ is an ultimately periodic continued fraction expansion with partial quotients in $\mathcal{O}_K$. 
\subsection{Convergence and periodicity}
We firstly provide some definitions and notations related to formal continued fraction expansions and examine the convergence of this expansion.\\
 For any $\alpha \in \mathbb{C}$, define the matrix
$D(\alpha):=\left[\begin{array}{cc}
\alpha & 1 \\
1 & 0
\end{array}\right] \text {, and set } t:=D(0); \text { note that } D(\alpha)^{-1}=t D(-\alpha) t=\left[\begin{array}{cc}
0 & 1 \\
1 & -\alpha
\end{array}\right] \text {. }
$
Let $F=\left[a_1, a_2, a_3, \ldots, a_n\right]$ be a finite continued fraction. Define
$$
M(F)=\left[\begin{array}{ll}
P_n & P_{n-1} \\
Q_n & Q_{n-1}
\end{array}\right]=\prod_{i=1}^n D\left(a_i\right)=D\left(a_1\right) D\left(a_2\right) \cdots D\left(a_n\right).
$$
It is worth noting that this is precisely the standard matrix employed for representing a continued fraction in matrix form. One can observe that 
$$\begin{aligned} M(F)^{-1} & =\prod_{i=0}^{n-1}\left[t D\left(-a_{n-i}\right) t\right]=t\left[\prod_{i=0}^{n-1} D\left(-a_{n-i}\right)\right] t \\ & =M\left(\left[0,-a_n, \ldots,-a_1, 0\right]\right).\end{aligned}$$
And $\det$ M(F) = $\det M(F)^{-1}=(-1)^n$.\\
Note that the periodic continued fraction $[b_1, \ldots, b_N, \overline{a_1, \ldots, a_k}]$ can be expressed as a purely periodic continued fraction $[\overline{b_1, \ldots, b_N,a_1, \ldots, a_k,0,-b_N, \ldots,-b_1, 0}]$, as the expression $[\cdots a,0,b,\cdots]$ is equivalent to $[\cdots,a+b,\cdots]$, hence for an ultimately periodic continued fraction
$P=[b_1,\cdots,b_N,\overline{a_1,\cdots,a_k}],$ we define 
$$\begin{aligned} 
E(P) & =\left[\begin{array}{ll}
E_{11}(P) & E_{12}(P) \\
E_{21}(P) & E_{22}(P)
\end{array}\right]:=M\left(\left[b_1, \ldots, b_N, a_1, \ldots, a_k, 0,-b_N, \ldots,-b_1, 0\right]\right) \\
& =D\left(b_1\right) \cdots D\left(b_N\right) D\left(a_1\right) \cdots D\left(a_k\right) t D\left(-b_N\right) \cdots D\left(-b_1\right) t \\
& =M\left(\left[b_1, \ldots, b_N\right]\right) M\left(\left[a_1, \ldots, a_k\right]\right) M\left(\left[b_1, \ldots, b_N\right]\right)^{-1},
\end{aligned}$$
It should be noted that the determinant of the matrix $E(P)$ is given by $(-1)^k$, which is solely determined by the periodic part.\\
Finally, we define the polynomial $f(P)=E_{21}(P)x^2+(E_{22}(P)-E_{11}(P))x-E_{12}(P)=0$, and it is called the polynomial associated to the periodic continued fraction expansion $P$. We write $E_{ij}$ instead of $E_{ij}(P)$ when there is no risk of causing misunderstanding.\\ 
 In \cite{s-int}, the authors describe an algorithm to determine if a periodic continued fraction expansion $[a_0,a_1,\cdots,a_n,\overline{a_{n+1},\cdots,a_{n+k}}]$ with partial quotients in rings $\mathcal{O}$ of $S$-integers in a number field is convergent, hence it is applicable to our case where all $a_i\in\mathcal{O}_K$. \\
The algorithm can be illustrated as follows:
\begin{algo}\label{alg1}
The value (or limit) of a periodic continued fraction:\\
$\textbf{Input:}$ A periodic continued fraction $P=[b_1, \ldots, b_N, \overline{a_1, \ldots, a_k}]$.\\
$\textbf{Output:}$ Either its value (or limit) $\hat{\gamma} (P)$ or "doesn't exist".
\begin{enumerate}
    \item compute $E=E(P)$ from the above definition.
    \item if E is a multiple of the identity then print "doesn't exist" and break;
    \item compute the multi-set of roots $\{\gamma ,\gamma^*\}$ of $f(P)=E_{21}x^2+(E_{22}-E_{11})x-E_{12}=0;$
    \item if $\gamma=\gamma^*$, then $\hat{\gamma}(P):=\gamma=\gamma^*$, return $\hat{\gamma}(P)$, break;
    \item Without loss of generality assume $\gamma\neq \infty$ and compute $|E_{21}\gamma+E_{22}|$;
    \item if $|E_{21}\gamma+E_{22}|=1$ then print "doesn't exist"; break;
    \item if $|E_{21}\gamma+E_{22}|>1$ then $\hat{\gamma}(P):=\gamma$, else $\hat{\gamma}(P):=\gamma^*$;
    \item Let $\textbf {INEQ}$ be:$$\ M\left(\left[a_{j+1}, \ldots, a_{k+j}\right]\right)_{21}=0 \text { and }\left|M\left(\left[a_{j+1}, \ldots, a_{k+j}\right]\right)_{22}\right|>1 \text { for some } 0 \leq j \leq k-1.$$
    Test if $\textbf {INEQ}$ is satisfied: if so, then print "doesn't exist"; break; else return $\hat{\gamma}(P)$.
\end{enumerate}
\end{algo}

\begin{lem}\label{lem!}
    Assume $P=[a_0,\cdots,a_N,\overline{a_{N+1},\cdots,a_{N+k}}]$ is an ultimately periodic continued fraction expansion, where all partial quotients $a_i\in\mathcal{O}_K$. If the quadratic equation $f(P)=E_{21}x^2+(E_{22}-E_{11})x-E_{12}=0$ has a positive determinant which is not a square in K, then the expansion is convergent. If the determinant is negative, the expansion does not converge.
\end{lem}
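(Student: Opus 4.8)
The plan is to translate convergence into the dynamics of the Möbius transformation attached to $E(P)$, whose fixed points are exactly the roots of $f(P)$, and to read off the behaviour of its iterates from the eigenvalues of $E$. The first step is to record the elementary identity linking the two relevant discriminants. Writing $\operatorname{tr} E = E_{11}+E_{22}$ and using $\det E = E_{11}E_{22}-E_{12}E_{21}=(-1)^k$, a direct expansion gives
$$\operatorname{disc} f(P)=(E_{22}-E_{11})^2+4E_{21}E_{12}=(\operatorname{tr}E)^2-4(-1)^k,$$
so the discriminant of $f(P)$ coincides with the discriminant of the characteristic polynomial $\lambda^2-(\operatorname{tr}E)\lambda+(-1)^k$ of $E$. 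Hence the sign of $\operatorname{disc}f(P)$ decides whether $E$ has real or genuinely complex eigenvalues, while the roots of $f(P)$ are precisely the fixed points of $z\mapsto (E_{11}z+E_{12})/(E_{21}z+E_{22})$.

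Next I would set up the bookkeeping of convergents. Let $B=M([a_0,\dots,a_N])$ and $A=M([a_{N+1},\dots,a_{N+k}])$ be the pre-period and period matrices, so that $E=BAB^{-1}$ by the definition of $E(P)$; in particular $A$ and $E$ are conjugate and share trace, determinant and discriminant. For $0\le r<k$ put $C_r=M([a_{N+1},\dots,a_{N+r}])$. Then the convergent $P_{N+jk+r}/Q_{N+jk+r}$ is the image of $\infty$ under the Möbius action of $BA^{j}C_r$, that is $B\bigl(A^{j}(C_r(\infty))\bigr)$. Thus the whole problem reduces to understanding $A^{j}(z)$ as $j\to\infty$ for the finitely many starting values $z=C_r(\infty)\in K\cup\{\infty\}$.

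In the case $\operatorname{disc}f(P)>0$ and not a square in $K$, the eigenvalues $\lambda_\pm=\tfrac12(\operatorname{tr}E\pm\sqrt{\operatorname{disc}f(P)})$ are real, distinct and nonzero (their product is $(-1)^k$), and they have distinct absolute value: equality would force $\operatorname{tr}E=0$ and hence $\operatorname{disc}f(P)=-4(-1)^k\in\{-4,4\}$, contradicting that it is a positive non-square. So $A$ is hyperbolic and diagonalizable. Here the non-square hypothesis does its essential second job: since the eigenvalues, and therefore all fixed points of $A$ and of $E$, are irrational over $K$, we have $E_{21}\neq0$ (otherwise a fixed point would be $\infty$ and the discriminant a square in $K$), both fixed points are finite quadratic irrationals over $K$, and none of the starting values $C_r(\infty)\in K\cup\{\infty\}$ can equal the repelling fixed point of $A$. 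Consequently $A^{j}(C_r(\infty))$ tends to the attracting fixed point $\alpha$ of $A$ for every $r$, so each residue class of convergents satisfies $P_{N+jk+r}/Q_{N+jk+r}\to B(\alpha)$; as these $k$ subsequences exhaust the tail, the full convergent sequence converges to the (real) root $B(\alpha)$ of $f(P)$.

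Finally, for $\operatorname{disc}f(P)<0$ the eigenvalues form a genuinely complex conjugate pair with $\lambda_+\lambda_-=(-1)^k$; since $\lambda_+\lambda_-=|\lambda_+|^2>0$ this forces $k$ even and $|\lambda_\pm|=1$, so $A$ is elliptic, conjugate over $\mathbb{R}$ to a nontrivial rotation of the projective line (the rotation angle is nonzero because $\operatorname{disc}f(P)\neq0$ excludes $\operatorname{tr}E=\pm2$). The orbit $A^{j}(\infty)$ then either cycles through finitely many values or is dense, but in neither case tends to a limit, so $P_n/Q_n$ has no limit and the expansion diverges. I expect the main obstacle to lie in the positive case, namely proving convergence of the \emph{entire} convergent sequence — all residues modulo $k$ together with the intermediate indices — to a single value, and in particular deploying the non-square hypothesis to simultaneously guarantee $E_{21}\neq0$ and to exclude the degenerate coincidence of some intermediate convergent with the repelling fixed point; the elliptic non-convergence in the negative case is then a comparatively routine dynamical statement.
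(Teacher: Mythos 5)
Your proof is correct, but it takes a genuinely different route from the paper's. The paper does not redo any dynamics: it treats Algorithm~\ref{alg1}, imported from \cite{s-int}, as a black box and merely verifies its hypotheses. In the positive non-square case it observes that the roots $\gamma,\gamma^*$ of $f(P)$ are quadratic over $K$, so $E_{21}\gamma+E_{22}\notin K$ and in particular $|E_{21}\gamma+E_{22}|\neq 1$, whence the algorithm returns a limit; in the negative case it computes $|E_{21}\gamma+E_{22}|^2=|E_{11}E_{22}-E_{12}E_{21}|=1$, so step~6 of the algorithm reports divergence. Your argument instead reconstructs from scratch the correctness of the relevant cases of that algorithm: note that if $\gamma$ is a fixed point of the M\"obius map of $E$, then $E$ sends the vector $(\gamma,1)$ to $(E_{21}\gamma+E_{22})\cdot(\gamma,1)$, so the quantity tested in step~6 is exactly the eigenvalue of $E$ attached to $\gamma$, and your hyperbolic/elliptic dichotomy via $\operatorname{disc}f(P)=(\operatorname{tr}E)^2-4(-1)^k$ is precisely the mechanism underlying the cited algorithm. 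What your route buys: it is self-contained, and it explicitly settles the degenerate possibilities the paper passes over in silence, namely step~2 ($E$ a multiple of the identity) and step~8 (the INEQ test on cyclic shifts of the period). You exclude these by the observation that the fixed points of $A$ are irrational over $K$ while every intermediate starting value $C_r(\infty)$ lies in $K\cup\{\infty\}$ --- the same fact that routes each residue class of convergents to the attracting fixed point; in the paper's language, a cyclic shift of the period with vanishing lower-left entry would force the (shift-invariant) discriminant to be a square in $K$. What the paper's route buys is brevity, at the cost of leaning on \cite{s-int} and leaving those checks implicit. The only point in your write-up worth tightening is the elliptic case: to rule out convergence of a finite orbit you need it to contain at least two distinct points, which holds because an elliptic transformation has no fixed points in $\mathbb{P}^1(\mathbb{R})$; your phrase ``nontrivial rotation'' is doing that work and deserves an explicit sentence.
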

\begin{proof}
    The proof is straightforward and can be derived directly from the steps of the above algorithm. If $\Delta(f(P))$ is positive and not a square in K, then $\{\gamma,\gamma^*\}$ are real numbers that are quadratic over K (hence being quartic irrationals). Since $E_{21},E_{22}\in \mathcal{O}_K $, the absolute value $|E_{21}\gamma+E_{22} |$ must not equal 1. Therefore, we can find the limit of the continued fraction expansion by applying  Algorithm \ref{alg1}.\\
    In the case where the determinant is negative, we will obtain two complex roots $\gamma,\gamma^*$ with identical absolute values. We observe that  $|E_{21}\gamma+E_{22} |=|E_{21}\cdot\dfrac{E_{11}-E_{22}+\sqrt{(E_{22}-E_{11})^2+4E_{12}E_{21}}}{2E_{21}}+E_{22} |$. As the determinant is negative, we can simplify this expression as follows: 
    $$|E_{21}\gamma+E_{22} |^2=|(\dfrac{E_{11}+E_{22}}{2})^2-\dfrac{(E_{22}-E_{11})^2+4E_{12}E_{21}}{4}|=|E_{11}E_{22}-E_{12}E_{21} |=1.$$
     Therefore, the continued fraction expansion does not converge due to  step 6 of the algorithm.
\end{proof}
\begin{lem}\label{lem@}
If the ultimately periodic continued fraction 
 expansion $[a_0,\cdots,a_N,\overline{a_{N+1},\cdots,a_{N+k}}]$ with partial quotients in 
$\mathcal{O}_K$ converges to a real number $\xi$, then $|P_n|$, $|Q_n|$ tends to infinity.
\end{lem}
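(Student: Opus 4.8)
The plan is to control the joint growth of $P_n$ and $Q_n$ through the transfer matrix of the periodic part and to show it is geometric. Write $M_N=M([a_1,\dots,a_N])$ for the fixed invertible matrix of the preperiod and $A=M([a_{N+1},\dots,a_{N+k}])$ for the period matrix. Using $M([a_1,\dots,a_{N+j+mk}])=M_N\,M([a_{N+1},\dots,a_{N+j}])\,A_j^{\,m}$ and reading off the first column, one has, for every $m\ge 0$ and every $0\le j<k$, the identity $\binom{P_{N+j+mk}}{Q_{N+j+mk}}=B_j A_j^{\,m}e_1$, where $B_j=M_N M([a_{N+1},\dots,a_{N+j}])$ is invertible, $A_j=M([a_{N+j+1},\dots,a_{N+j+k}])$ is the period matrix cyclically shifted to start at position $j$ (so $A_0=A$), and $e_1$ is the first standard basis vector. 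By periodicity each $A_j=M([a_{N+1},\dots,a_{N+j}])^{-1}A\,M([a_{N+1},\dots,a_{N+j}])$ is conjugate to $A$, hence shares its eigenvalues $\lambda,\mu$, which satisfy $\lambda\mu=(-1)^k$.

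First I would record what convergence gives at the matrix level. Since $\det A=(-1)^k$ and the discriminant $\mathrm{tr}(A)^2-4(-1)^k$ of the characteristic polynomial of $A$ coincides with the discriminant of $f(P)$, convergence (through Lemma~\ref{lem!} and Algorithm~\ref{alg1}) forces this discriminant to be positive and the eigenvalues to be real of distinct absolute value; since $|\lambda\mu|=1$, after labelling we get $|\lambda|>1>|\mu|>0$. Decomposing in the eigenbasis $\{w_\lambda^{(j)},w_\mu^{(j)}\}$ of $A_j$ gives $A_j^{\,m}e_1=\alpha_j\lambda^m w_\lambda^{(j)}+\beta_j\mu^m w_\mu^{(j)}$, and because $|\mu|^m\to 0$ the vector $B_jA_j^{\,m}e_1$ is asymptotic to $\alpha_j\lambda^m B_j w_\lambda^{(j)}$ provided $\alpha_j\neq 0$ and the relevant coordinate of $B_j w_\lambda^{(j)}$ does not vanish.

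These two nonvanishing conditions are the heart of the matter, and where I expect the real work. For the coordinate of $B_j w_\lambda^{(j)}$: conjugating, $B_j A_j B_j^{-1}=E(P)$ for every $j$, so $B_j w_\lambda^{(j)}$ is the $\lambda$-eigenvector of $E(P)$, whose associated M\"obius fixed point is exactly the limit $\xi$; as $\xi$ is a finite real number the fixed point is not $\infty$, hence the second coordinate of $B_j w_\lambda^{(j)}$ is nonzero and $|Q_{N+j+mk}|\to\infty$. For $\alpha_j\neq 0$: one checks that $\alpha_j=0$ means $e_1$ is the contracting eigenvector of $A_j$, i.e. $(A_j)_{21}=0$ together with $(A_j)_{11}=\mu$, whence $|(A_j)_{22}|=|\lambda|>1$. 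But $(A_j)_{21}=M([a_{N+j+1},\dots,a_{N+j+k}])_{21}$ and $(A_j)_{22}=M([a_{N+j+1},\dots,a_{N+j+k}])_{22}$, so this is precisely the divergence condition $\textbf{INEQ}$ of Algorithm~\ref{alg1} at shift $j$, contradicting convergence. Thus $\alpha_j\neq 0$ for every $j$, and since there are only finitely many residue classes $j$ we conclude $|Q_n|\to\infty$.

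Finally $|P_n|\to\infty$ follows from $|Q_n|\to\infty$ together with $P_n/Q_n\to\xi$, because the relevant limit $\xi$ is irrational and in particular nonzero, so that $|P_n|=|Q_n|\,|P_n/Q_n|\to\infty$. The main obstacle is the nondegeneracy step $\alpha_j\neq 0$, and the key idea is that the convergence test $\textbf{INEQ}$, quantified over all cyclic shifts $j$, is exactly what rules out alignment of the initial vector $e_1$ with the contracting eigendirection, while the finiteness of $\xi$ disposes of the surviving dominant coordinate.
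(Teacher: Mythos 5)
Your argument takes a genuinely different route from the paper's. The paper proceeds by contradiction: if some subsequence $|Q_{n_i}|$ stayed bounded, then $P_{n-1}Q_n-P_nQ_{n-1}=(-1)^n$ forces $|Q_{n_i-1}|\to\infty$, the continuant symmetry of Proposition \ref{prop2} identifies $Q_{n_i}/Q_{n_i-1}$ with the reversed continued fraction $[a_{n_i},\dots,a_1]$, and this ratio is shown to tend to the limit of the reversed periodic expansion $[\overline{b_k,\dots,b_1}]$, which is claimed to be nonzero --- a contradiction. You instead read $Q_n$ off the columns of $B_jA_j^{\,m}e_1$ and extract geometric growth from the dominant eigenvalue. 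In the hyperbolic case your proof is sound, and your use of \textbf{INEQ} to rule out $\alpha_j=0$ is exactly the matrix form of the paper's ``reversed limit cannot be zero'' step (one checks that $\alpha_j=0$, \textbf{INEQ} at shift $j$, and ``the reversed period read from position $j$ converges to $0$'' are all the same condition), arguably carried out more cleanly than in the paper.

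The genuine gap is your opening claim that convergence forces $\Delta(f(P))>0$ with eigenvalues of distinct modulus. Neither Lemma \ref{lem!} nor Algorithm \ref{alg1} gives this: Lemma \ref{lem!} only excludes $\Delta<0$, and both the modulus test (step 6) and \textbf{INEQ} (step 8) are reached only when $\gamma\neq\gamma^*$; in the parabolic case $\gamma=\gamma^*$, i.e.\ $\Delta(f(P))=0$, the algorithm stops at step 4 and certifies convergence. Such expansions genuinely converge: $[\overline{2,-2}]$ has $E=\begin{pmatrix}-3&2\\-2&1\end{pmatrix}$ with characteristic polynomial $(x+1)^2$ and converges to $1$. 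For these, $A_j$ has no eigenbasis, so your decomposition does not exist; a Jordan-block variant would be needed, and there \textbf{INEQ} can no longer supply the nondegeneracy, because a parabolic $A_j$ with $(A_j)_{21}=0$ automatically has $|(A_j)_{22}|=1$, so \textbf{INEQ} stays silent. This is not a removable technicality: for $a\in\mathcal{O}_K\setminus\{0,-2\}$ one computes $M([a,1,-2,1])=\begin{pmatrix}-1&-(a+2)\\0&-1\end{pmatrix}$, and the ultimately periodic expansion $[b,\overline{a,1,-2,1}]$ converges to $b$ (its convergents at the indices $4m$ equal $b$ exactly, the others tend to $b$), yet $Q_{4m}=\pm1$ stays bounded --- so in the parabolic case the conclusion of the lemma itself fails. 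What you have is therefore a proof under the unstated hypothesis $\Delta(f(P))\neq0$; the paper's own proof has the same blind spot, since it invokes Lemma \ref{lem!} for the reversed period (which presupposes a positive nonsquare discriminant) and its ``cannot be zero'' assertion fails exactly in the example above, where the period's lower-left entry vanishes. A second, smaller gap: your deduction of $|P_n|\to\infty$ invokes irrationality of $\xi$, which is not among the hypotheses; what is needed is $\xi\neq0$, and for convergent periodic expansions with value $0$ Proposition \ref{prop1} gives $|P_n|=|\xi_1\cdots\xi_{n+1}|^{-1}$, which need not tend to infinity (the paper's one-sentence deduction of $|P_n|\to\infty$ has the same defect).
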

\begin{proof}
We prove that $|Q_n|\rightarrow\infty$. Since $\dfrac{P_n}{Q_n}$ is convergent, we could also derive that $|P_n|\rightarrow\infty$.\\
     As stated in Proposition \ref{prop1}, we have $P_{n-1} Q_n-P_n Q_{n-1}=(-1)^n$, hence $\dfrac{P_n}{Q_n}-\dfrac{P_{n-1}}{Q_{n-1}}=\dfrac{(-1)^{n-1}}{Q_{n-1}Q_n}$. Therefore, if the continued fraction expansion is convergent, this difference should approach 0, and $|Q_{n-1}Q_n|\rightarrow\infty$. For that reason, there is no infinite subsequence $\{Q_{n_i}\}$ with bounded absolute values, and simultaneously having an infinite number of pairs $(n_{i-1},n_i)$ being adjacent integers.\\
    Since the lemma is in fact equivalent to the statement that there does not exist an infinite subsequence $\{Q_{n_i}\}$ with bounded absolute values, we will prove it by contradiction, assuming that $\{Q_{n_i}\}$ is such a sequence.\\
   Since the continued fraction expansion of $\xi$ is ultimately periodic, we may assume that the value-bounded subsequence $\{Q_{n_i}\}$ satisfies the condition that $a_{n_i}$ are identical for all indices $n_i$ appearing in the subsequence, otherwise we can extract a proper subsequence from $\{Q_{n_i}\}$ to meet the condition. Based on the proceeding discussion, it can be observed that for the satisfactory subsequence $\{Q_{n_i})\}$, the absolute values of the subsequence $\{Q_{n_i-1}\}$ will tend towards infinity, hence the absolute values of $\{\dfrac{Q_{n_i}}{Q_{n_i-1}}\}$ approaches 0.\\
   We have $Q_{n_i}=a_{n_i} Q_{n_i-1}+P_{n_i-2}$, thus the ratio $\dfrac{Q_{n_i}}{Q_{n_i-1}}$ can be formally expressed as
   $$\dfrac{Q_{n_i}}{Q_{n_i-1}}=a_{n_i}+\dfrac{1}{a_{n_i-1}+\dfrac{1}{\cdots+\dfrac{1}{a_1}}}.$$
   We denote this continued fraction as $[a_{n_i},\cdots,a_1]$, which converges to 0 as $n_i$ increases.\\
   It's noteworthy that if $[\overline{\alpha_0,\cdots,\alpha_m}]$ is a purely periodic continued fraction expansion taking a real root of the polynomial $ax^2+bx+c=0$ as the limit, then $[\overline{\alpha_m,\cdots,\alpha_0}]$ also converges: The key is to verify that this expansion meets all the convergence criteria outlined in the algorithm above. In fact, assuming that the final two convergents of $[\alpha_0,\cdots,\alpha_m]$ are $\dfrac{p_{m-1}}{q_{m-1}}$ and $\dfrac{p_m}{q_m}$, and those of $[\alpha_m,\cdots,\alpha_0]$ are $\dfrac{p^\prime_{m-1}}{q^\prime_{m-1}}$ and $\dfrac{p^\prime_m}{q^\prime_m}$, we can deduce from Proposition \ref{prop2} that 
   $$p^\prime_m=p_m,\ q^\prime_m=p_{m-1},$$
   $$p^\prime_{m-1}=q_m,\ q^\prime_{m-1}=q_{m-1}. $$
   We have $a=q_m,\ b=-(p_m-q_{m-1}),\ c=-p_{m-1}$, thus the quadratic equation corresponding to  $[\overline{\alpha_m,\cdots,\alpha_0}]$ is $cx^2-bx+a=0$, which has the same discriminant as $ax^2+bx+c=0$. Therefore, according to Lemma \ref{lem!}, $[\overline{\alpha_m,\cdots,\alpha_0}]$ is convergent to one root of $cx^2-bx+a=0$.\\
   Based on our previous assumptions, we may assume the periodic part of the expansion of $\xi$ ends with the $a_{n_i}$ chosen above. i.e., if we denote $a_{n_i}:=b_k$, then $\xi$ can be expressed as $[a_0,\cdots,a_{N^\prime},\overline{b_1,\cdots,b_k}]$ for some integer $N^\prime$ with $(b_1,\cdots,b_k)$ obtained by appropriately cyclically shifting $(a_{N+1},\cdots,a_{N+k} )$ to ensure the following holds:
   $$\dfrac{Q_{n_i}}{Q_{n_i-1}}=[b_k,\cdots,b_1,b_k,\cdots,b_1,\cdots,a_{N^\prime},\cdots,a_1].$$
   Denote $\alpha:=[a_{N^\prime},\cdots,a_1]$,  $l_i:=\dfrac{n_i-N^\prime}{k}$ (by above discussion $l_i\in\mathbb{Z}_+$), and the last two convergents of $[\underbrace{b_k,\cdots,b_1}_{1},\underbrace{b_k,\cdots,b_1}_{2},\cdots,\underbrace{b_k,\cdots,b_1}_{l_i}]$ are $\dfrac{\gamma_{i,1}}{\delta_{i,1}}$ and $\dfrac{\gamma_{i,2}}{\delta_{i,2}}$, both of which converges to $[\overline{b_k,\cdots,b_1}]$. Then we have $\dfrac{Q_{n_i}}{Q_{n_i-1}}=\dfrac{\alpha \gamma_{i,1}+\gamma_{i,2}}{\alpha \delta_{i,1}+\delta_{i,2}}$, hence $\lim\limits_{n_i\rightarrow\infty}\dfrac{Q_{n_i}}{Q_{n_i-1}}=[\overline{b_k,\cdots,b_1}]$, which cannot be zero. This leads to a contradiction to the initial assumption.
\end{proof}
By means of a proof similar to that of the theorem, we can establish the following corollary:
\begin{cor}\label{cor!}
    If the ultimately periodic continued fraction 
 expansion $[a_0,\cdots,a_N,\overline{a_{N+1},\cdots,a_{N+k}}]$ with partial quotients in 
$\mathcal{O}_K$ converges to some real number $\xi$, then there does not exist a subsequence $\{Q_{n_i}\}$ or $\{P_{n_i}\}$ such that $|\dfrac{Q_{n_i}}{Q_{n_i-1}}|$ or $|\dfrac{P_{n_i}}{P_{n_i-1}}|$ tend to infinity or 0. Therefore, $|\dfrac{Q_n}{Q_{n-1}}|$ and $|\dfrac{P_n}{P_{n-1}}|$ are bounded and have a lower bound larger than 0.
\end{cor}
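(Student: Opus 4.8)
The plan is to reduce everything to controlling reversed finite continued fractions. From the recurrences $Q_n=a_nQ_{n-1}+Q_{n-2}$ and $P_n=a_nP_{n-1}+P_{n-2}$, dividing through and unfolding gives the identities $\frac{Q_n}{Q_{n-1}}=[a_n,a_{n-1},\ldots,a_1]$ and $\frac{P_n}{P_{n-1}}=[a_n,a_{n-1},\ldots,a_0]$, exactly as in the ratio computation used in Lemma \ref{lem@}. Since $|Q_n|\to\infty$ by Lemma \ref{lem@}, these ratios are well defined for all large $n$, so it suffices to analyse their limiting behaviour; and because the terminal entry $a_0$ in the $P$-case does not affect the limit, I would present only the $Q$-case and remark that the $P$-case is identical.

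Next I would partition the indices $n>N$ according to the residue $j$ of $n$ modulo the period length $k$. For a fixed phase $j$ the leading entry $a_n$ is always the same element of the period, so $[a_n,a_{n-1},\ldots,a_1]$ has the shape $[\,b_k,\ldots,b_1,\ b_k,\ldots,b_1,\ \ldots,\ a_N,\ldots,a_1\,]$ for the appropriate cyclic shift $(b_1,\ldots,b_k)$ of the periodic block, with the number of reversed blocks growing to infinity. This is precisely the configuration analysed in the proof of Lemma \ref{lem@}: writing $\alpha=[a_N,\ldots,a_1]$ and letting $\frac{\gamma_{1}}{\delta_{1}},\frac{\gamma_{2}}{\delta_{2}}$ be the last two convergents of the iterated reversed period, Proposition \ref{prop2} yields $\frac{Q_n}{Q_{n-1}}=\frac{\alpha\gamma_{1}+\gamma_{2}}{\alpha\delta_{1}+\delta_{2}}$, and both convergents tend to the purely periodic value $L_j:=[\overline{b_k,\ldots,b_1}]$. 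Hence along each phase $\frac{Q_n}{Q_{n-1}}\to L_j$, and the full sequence $\{Q_n/Q_{n-1}\}$ has at most $k$ distinct limit points $L_0,\ldots,L_{k-1}$.

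The crux, and the step I expect to be the main obstacle, is to show that every $L_j$ is both finite and nonzero. Since the expansion converges to a real number, Lemma \ref{lem!} tells us the discriminant of $f(P)$ is positive and not a square in $K$ (the negative case diverges), so $\xi$ is a genuine quadratic irrational over $K$. By the reversal statement inside the proof of Lemma \ref{lem@}, each $L_j$ is a root of a reversed quadratic $cx^2-bx+a=0$ carrying the same discriminant $b^2-4ac$ as the forward equation. Because that discriminant is not a square in $K$, neither $a$ nor $c$ can vanish (otherwise the discriminant would collapse to the perfect square $b^2$); consequently both roots are finite, and both lie in a genuine quadratic extension of $K$, so neither equals $0$ nor $\infty$. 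Thus every $L_j$ is a finite nonzero quadratic irrational — which is exactly the fact that "cannot be zero" already used implicitly in Lemma \ref{lem@}.

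Finally I would assemble the conclusion: a sequence with finitely many limit points, all finite and all nonzero, is automatically bounded above and bounded below by a positive constant, and in particular admits no subsequence tending to $0$ or to $\infty$. Applying the identical argument to $\frac{P_n}{P_{n-1}}=[a_n,\ldots,a_0]$ then finishes the proof. The only genuinely delicate points are the phase-by-phase convergence of the reversed continued fractions, which I would import verbatim from Lemma \ref{lem@}, and the exclusion of the degenerate limits $0$ and $\infty$, which I would handle entirely through the non-square discriminant supplied by Lemma \ref{lem!}.
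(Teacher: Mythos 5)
Your proof follows the same skeleton as the paper's (the paper derives this corollary ``by means of a proof similar to'' Lemma \ref{lem@}): write $Q_n/Q_{n-1}$ as the reversed continued fraction $[a_n,\ldots,a_1]$, split the indices into the $k$ phase classes, identify the limit along each class with the purely periodic value $[\overline{b_k,\ldots,b_1}]$, and then rule out $0$ and $\infty$ as possible values of that limit. Your way of doing the last step --- the reversed quadratic $cx^2-bx+a$ has the same discriminant as $ax^2+bx+c$, and a non-square discriminant forces $a\neq 0$, $c\neq 0$ and both roots outside $K$ --- is in fact more explicit than the paper's bare assertion that the limit ``cannot be zero''.

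However, there is a genuine gap at the point where you supply the hypothesis for that argument: you claim that convergence of the expansion implies, via Lemma \ref{lem!}, that $\Delta(f(P))$ is positive and not a square in $K$. Lemma \ref{lem!} is not an equivalence: it says that a positive non-square discriminant implies convergence and that a negative discriminant implies divergence, and it is silent about discriminants that are zero or are positive squares in $K$; convergent expansions of that kind exist. For example, over $K=\mathbb{Q}(\sqrt{2})$ the expansion $P=[1,\overline{2}]$ has all partial quotients in $\mathcal{O}_K$, converges to $\sqrt{2}\in K$, and has $E(P)=\left[\begin{smallmatrix}1&2\\1&1\end{smallmatrix}\right]$, so $f(P)=x^2-2$ and $\Delta(f(P))=8=(2\sqrt{2})^2$ is a square in $K$. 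The corollary is stated for an arbitrary convergent ultimately periodic expansion --- the limit $\xi$ is not assumed to be quartic --- so precisely in these degenerate cases (limit lying in $K$, or a repeated root) your exclusion of the limits $0$ and $\infty$, which you yourself identify as the crux, has no justification: the roots of the reversed quadratic then lie in $K$ (or coincide), and ``not in $K$, hence nonzero'' collapses. To repair it, either handle the square/zero-discriminant cases by a separate argument, or restrict to the situation in which the paper actually invokes the corollary (e.g.\ in Lemma \ref{lembound}), where $\xi$ is a real quartic irrational, and derive the positive non-square discriminant from the fact that $\xi$ is a real root of $f(P)$ with $\xi\notin K$ --- not from Lemma \ref{lem!}. (To be fair, the paper's own proof of Lemma \ref{lem@} tacitly works under the same non-degeneracy assumption when it invokes Lemma \ref{lem!} for the reversed expansion, but it never asserts the false converse on which your exclusion step rests.)
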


Now we consider a continued fraction expansion $P=[a_0,\cdots,a_N,\overline{a_{N+1},\cdots,a_{N+k}}]$, which is ultimately periodic with all partial quotients $a_i\in\mathcal{O}_K$. Assume the discriminant of $f(P)$ is positive, then by Lemma \ref{lem!}, it converges to some real number $\xi_P$. We may assume $\Delta(f(P))$ is not a real square in K, i.e. $\xi_P$ is a quartic irrational, otherwise $\xi_P$ will be a quadratic irrational, and by the well-known Lagrange theorem it can be expanded into an ultimately periodic continued fraction with $a_i\in\mathbb{Z}_+$ for all $i\geq1$.  We call it a continued fraction expansion of $\xi_P $ in K. Notice that all complete quotients $\xi_n$ lie in the same quadratic extension L over K, where $L=K(\sqrt{\Delta(f(P))}$. Hence this is a necessary condition for an element $\xi$ to admit a periodic expansion over K. Next we assume $\xi$ is a real quartic irrational, lying in a quadratic extension of K.\\
Denote $\rm{Gal}(K/\mathbb{Q})=\{1,\sigma\}$, where for all $\alpha\in K$, $\sigma$ maps $\alpha$ to its (real) conjugate $\overline{\alpha}$. Assume a real quartic irrational $\xi$ has minimal polynomial $f(x)=Ax^2+Bx+C$ over K, where $A,B,C\in\mathcal{O}_K$,  and the discriminant $B^2-4 A C > 0$ is not a square in $\mathcal{O}_K$. We can describe the periodicity of a continued fraction expansion alternatively through the finite existence of the complete quotients $\xi_n$. Since all $\xi_n\in L\setminus K$, where $L=K(\sqrt{B^2-4AC})$, we can apply the Northcott theorem in Proposition \ref{prop4}. Therefore, we can deduce that if a continued fraction expansion $[a_0,\cdots,a_n,\cdots]$ converges to $\xi$, then it is ultimately periodic  if and only if there exists an upper bound for all $H(\xi_{n+1})$. Here for convenience we use  the shifted index $n+1$.\\
With the formula mentioned in Proposition \ref{prop5} we have 
$$H(\xi_{n+1})^4=d \prod\limits_{\phi:\rm{embeddings\ of\ }L\hookrightarrow \mathbb{C}} \max \{1,|\phi(\xi_{n+1})|\},$$
where $d$ is the leading coefficient of the minimal polynomial of $\xi_{n+1}$ over $\mathbb{Z}$.\\
Now we calculate the minimal polynomial $f_{n+1}$ of all complete quotients $\xi_{n+1}$ over $\mathcal{O}_K$.
It is worth noting that the calculation is analogous to that in the classical continued fraction case, which can be found in \cite{khinchin}. Hence we have
$$f_{n+1}(x):=A_{n+1} x^2+ B_{n+1} x + C_{n+1}=0,$$
where
$$\left\{\begin{array}{l}
			A_{n+1}=A P_n^2+B P_n Q_n +C Q_n^2, \\
			B_{n+1}=2 A P_n P_{n-1}+B(P_n Q_{n-1}+P_{n-1} Q_n)+2 C Q_n Q_{n-1}, \\
			C_{n+1}=A P_{n-1}^2+B P_{n-1} Q_{n-1} +C Q_{n-1}^2=A_n.
		\end{array}\right.$$
  
  Then the minimal polynomial of $\xi_{n+1}$ over $\mathbb{Z}$ is $f_{n+1}(x)\cdot \sigma(f_{n+1}(x))$, and the leading coefficient of the minimal polynomial of $\xi_n$ over $\mathbb{Z}$ is $A_{n+1}\cdot \sigma(A_{n+1})$. Then we can proceed to compute the Weil height of $\xi_{n+1}$:
	$$H(\xi_{n+1})^4=|A_{n+1}\cdot\sigma(A_{n+1})|\prod\limits_{\phi} \max\{1,|\phi(\xi_{n+1})|\}=|A_{n+1}\cdot\sigma(A_{n+1})|\prod\limits_{\phi} \max\{1,|\phi(\dfrac{\xi Q_{n-1}-P_{n-1}}{\xi Q_n-P_n})|\}.$$
	Multiply then divide the term $\prod\limits_{\phi}|\phi(\xi Q_n-P_n)|$, we obtain $$H(\xi_{n+1})^4=\dfrac{|A_{n+1}\cdot\sigma(A_{n+1})|}{\prod\limits_{\phi}|\phi(\xi Q_n-P_n)|}\prod_{\phi}\max \{|\phi(\xi Q_{n-1}-P_{n-1})|,|\phi(\xi Q_n-P_n)|\}.$$
 
 We define $S_n=\xi Q_n-P_n$. Since $\xi$ satisfies the quadratic equation $Ax^2+Bx+C=0$ over $\mathcal{O}_K$, $S_n$ is the root of $A(\dfrac{x+P_n}{Q_n})^2+B(\dfrac{x+P_n}{Q_n})+C=0$. Simplifying this expression yields $Ax^2+(BQ_n+2AP_n)x+A_{n+1}=0$, thus we have $\prod_{\phi} |\phi(S_n)|=\dfrac{|A_{n+1}\cdot\sigma(A_{n+1})|}{|A\cdot\sigma(A)|}$, and $H(\xi_{n+1})^4=|A\cdot\sigma(A)|\prod_{\phi}\max\{|\phi(S_n)|,|\phi(S_{n-1})|\}$.\\ 
Based on the previous discussion, we can explicitly express all $\phi$. Since $\phi$ maps $\xi$ to one of the roots of the equation $(Ax^2+Bx+C)\cdot(\sigma(A)x^2+\sigma(B)x+\sigma(C))=0$, we denote the maps
$$\tau_1:\sqrt{B^2-4AC}\rightarrow-\sqrt{B^2-4AC},$$
 $$\tau_2:\sqrt{B^2-4AC}\rightarrow\sqrt{\sigma(B^2-4AC)},$$
 $$\tau_3:\sqrt{B^2-4AC}\rightarrow -\sqrt{\sigma(B^2-4AC)},$$
 Then the embeddings of $L\hookrightarrow\mathbb{C}$ are $\{1,\tau_1,\sigma\tau_2,\sigma\tau_3\}$.\\
We give the following lemma to show that both $\prod\limits_{\phi\in\{1,\tau_1\}}\max\{|\phi(S_n)|,|\phi(S_{n-1})|\}$ and $\prod\limits_{\phi\in\{\tau_2\sigma,\tau_3\sigma\}}\max\{|\phi(S_n)|,|\phi(S_{n-1})|\}$ should be bounded for an ultimately periodic expansion.
\begin{lem}\label{lembound}
    If an ultimately periodic continued fraction expansion $P=[a_0,\cdots,a_N,\overline{a_{N+1},\cdots,a_{N+k}}]$ converges to a real quartic irrational $\xi$ with all partial quotients $a_i\in\mathcal{O}_K$, then  $\prod\limits_{\phi\in G_1}\max\{|\phi(S_n)|,|\phi(S_{n-1})|\}$ and $\prod\limits_{\phi\in G_2}\max\{|\phi(S_n)|,|\phi(S_{n-1})|\}$ are bounded for all $n$, where $G_1=\{1,\tau_1\}$, $G_2=\{\tau_2\sigma,\tau_3\sigma\}$.
\end{lem}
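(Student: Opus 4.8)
The plan is to reduce both inequalities to a single easy fact — that the product over \emph{all four} embeddings is bounded — together with a lower bound on each of the two partial products. Since $P$ is ultimately periodic, the complete quotients $\xi_{n+1}$ run through a finite set of quartic irrationals, so $H(\xi_{n+1})$ is bounded above by some constant $H_{\max}$; the established identity $H(\xi_{n+1})^4=|A\cdot\sigma(A)|\prod_{\phi}\max\{|\phi(S_n)|,|\phi(S_{n-1})|\}$ then gives $\prod_{\phi\in G_1}\max\{\cdots\}\cdot\prod_{\phi\in G_2}\max\{\cdots\}\le H_{\max}^4/|A\cdot\sigma(A)|$. The naive hope of bounding each factor $\max\{|\phi(S_n)|,|\phi(S_{n-1})|\}$ on its own is doomed: since $P_n/Q_n\to\xi$ and $|Q_n|\to\infty$ by Lemma \ref{lem@}, the factor attached to $\tau_1$ (where $\xi\mapsto\xi'$, the $K$-conjugate root of $f$) satisfies $|\tau_1(S_n)|=|\xi' Q_n-P_n|\to\infty$. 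The correct strategy is therefore to bound each partial product \emph{from below} by a positive constant; combined with the upper bound on their product, this forces each of them to be bounded above as well.

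For the lower bounds I would use a Vieta/norm identity. Writing $\xi,\xi'$ for the two roots of $Ax^2+Bx+C$ over $K$, the embedding $1$ sends $S_n$ to $\xi Q_n-P_n$ and $\tau_1$ sends it to $\xi' Q_n-P_n$ (both fix $P_n,Q_n\in\mathcal{O}_K$), whence
$$1(S_n)\,\tau_1(S_n)=(\xi Q_n-P_n)(\xi' Q_n-P_n)=\frac{1}{A}\bigl(AP_n^2+BP_nQ_n+CQ_n^2\bigr)=\frac{A_{n+1}}{A},$$
using $\xi+\xi'=-B/A$, $\xi\xi'=C/A$ and the formula for $A_{n+1}$. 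Since each maximum dominates its $n$-th entry, $\prod_{\phi\in G_1}\max\{\cdots\}\ge|1(S_n)|\,|\tau_1(S_n)|=|A_{n+1}|/|A|$. Applying $\sigma$ to the same computation, the two embeddings in $G_2$ (which restrict to $\sigma$ on $K$ and send $\xi$ to the two roots of $\sigma(A)x^2+\sigma(B)x+\sigma(C)$) yield $\prod_{\phi\in G_2}\max\{\cdots\}\ge|\sigma(A_{n+1})|/|\sigma(A)|$.

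It remains to show $|A_{n+1}|$ and $|\sigma(A_{n+1})|$ are bounded below by positive constants, and this is where the main work lies and where periodicity is used a second time. Each $A_{n+1}$ is nonzero, being the leading coefficient of the degree-two minimal polynomial $f_{n+1}$ of the quartic irrational $\xi_{n+1}$ over $K$, so it suffices to show that $A_{n+1}$ takes only finitely many values. The polynomial $f_{n+1}$ is the pullback of $f$ under the M\"obius substitution with matrix $\left[\begin{smallmatrix}P_n & P_{n-1}\\ Q_n & Q_{n-1}\end{smallmatrix}\right]$, whose determinant is $(-1)^{n+1}$ by Proposition \ref{prop1}; hence $\Delta(f_{n+1})=B^2-4AC$ for every $n$. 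On the periodic tail $\xi_{n+1}=\xi_{n+1+k}$, so $f_{n+1}$ and $f_{n+1+k}$ are $K$-proportional, and equal discriminants force the proportionality constant to be $\pm1$, giving $|A_{n+1+k}|=|A_{n+1}|$. Thus $|A_{n+1}|$ (and likewise $|\sigma(A_{n+1})|$) is eventually periodic and takes finitely many nonzero values, producing constants $c_1,c_2>0$ with $\prod_{\phi\in G_1}\ge c_1$ and $\prod_{\phi\in G_2}\ge c_2$. Combined with $\prod_{\phi\in G_1}\prod_{\phi\in G_2}\le H_{\max}^4/|A\cdot\sigma(A)|$, this yields $\prod_{\phi\in G_1}\le H_{\max}^4/(|A\cdot\sigma(A)|\,c_2)$ and symmetrically for $\prod_{\phi\in G_2}$, which is the assertion. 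I expect the genuine obstacle to be exactly this lower bound on $|A_{n+1}|$: the Vieta identity is elementary, but without the discriminant-preservation step one cannot rule out the proportionality constants degenerating, and it is precisely there that ultimate periodicity is indispensable.
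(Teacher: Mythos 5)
Your proposal is correct, but the core of your argument is genuinely different from the paper's. Both proofs start identically: ultimate periodicity gives finitely many complete quotients, hence a uniform bound on $H(\xi_{n+1})$, and the identity $H(\xi_{n+1})^4=|A\cdot\sigma(A)|\prod_{\phi}\max\{|\phi(S_n)|,|\phi(S_{n-1})|\}$ bounds the product $F_1(n)F_2(n)$; and both then exploit the fact that a bounded product together with positive lower bounds on the factors forces upper bounds on each factor. Where you diverge is in how the lower bound is obtained. The paper works analytically with $F_1(n)$ alone: it writes $|Q_n|\cdot|\xi Q_n-P_n|=\frac{1}{|\xi_{n+1}+Q_{n-1}/Q_n|}$, invokes Corollary \ref{cor!}, and rules out a subsequence with $\xi_{n_i+1}+Q_{n_i-1}/Q_{n_i}\to 0$ by the transpose-matrix argument from Lemma \ref{lem@} (showing $\lim Q_{n_i}/Q_{n_i-1}$, the value of the reversed periodic expansion, cannot equal $-1/\xi_{n_i+1}$, since one root of the transposed quadratic satisfies $|E_{12}\gamma+E_{22}|>1$ and the other $<1$); this yields two-sided bounds $t_1t_3(|\tau_1(\xi)-\xi|-\epsilon)\le F_1(n)\le t_2t_4(|\tau_1(\xi)-\xi|+\epsilon)$, and then $F_2=(F_1F_2)/F_1$ is bounded. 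You instead bound \emph{both} $F_1$ and $F_2$ from below algebraically via the Vieta identity $F_i(n)\ge |A_{n+1}|/|A|$ (resp. $|\sigma(A_{n+1})|/|\sigma(A)|$), and then prove $\inf_n|A_{n+1}|>0$ by noting that on the periodic tail $\xi_{n+1}=\xi_{n+1+k}$, so $f_{n+1}$ and $f_{n+1+k}$ are $K$-proportional, while the determinant identity forces all $\Delta(f_{n+1})=B^2-4AC\neq 0$, pinning the proportionality constant to $\pm1$. This is sound (the nonvanishing $A_{n+1}\neq0$ follows since $\xi_{n+1}\in L\setminus K$, and $\sigma(\pm1)=\pm1$ handles the conjugate sequence), and it is arguably cleaner: it avoids all limit arguments and reversed continued fractions. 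What it does not give you is the quantitative by-product of the paper's route—the boundedness of $|q_n|\cdot|\alpha q_n-p_n|$ for any periodic expansion—which the paper explicitly reuses in the proof of Theorem \ref{thm>0}; conversely, your Vieta inequality $F_1(n)\ge|A_{n+1}|/|A|$ is exactly the mechanism the paper later uses for the necessity direction of Theorem \ref{thmbound}, so your lemma proof and that theorem would dovetail nicely.
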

\begin{proof}
    For convenience we denote $F_i(n)=\prod\limits_{\phi\in G_i}\max\{|\phi(S_n)|,|\phi(S_{n-1})|\}$. Based on the assumption of periodicity, it is evident that  $F_1(n)\cdot F_2(n)$ are bounded for all $n$. Therefore, to prove the first part of the theorem, it suffices to demonstrate that $F_1(n)$ is upper and lower bounded for all $n$. \\
    $F_1(n)=\max\{|\xi Q_n-P_n|,|\xi Q_{n-1}-P_{n-1}|\}\cdot\max\{|\tau_1(\xi)Q_n-P_n|,|\tau_1(\xi)Q_{n-1}-P_{n-1} |\}$. Again by Proposition \ref{prop1} we have $|\xi Q_n-P_n|=\frac{1}{|\xi_{n+1}Q_n+Q_{n-1}|}$, hence $|Q_n|\cdot|\xi Q_n-P_n|=\frac{1}{|\xi_{n+1}+\frac{Q_{n-1}}{Q_{n}}|}$. Next we show that $\frac{1}{|\xi_{n+1}+\frac{Q_{n-1}}{Q_{n}}|}$ is bounded if the continued fraction expansion is periodic, hence $|\xi Q_n-P_n|\to0$.\\
    As stated in Corollary \ref{cor!}, it is lower bounded, hence it suffices to show that there is no subsequence of $\{|\xi_{n+1}+\frac{Q_{n-1}}{Q_{n}}|\}$ that tends to 0. We still prove this by contradiction. Suppose there exists such a subsequence $\{|\xi_{n_i+1}+\frac{Q_{n_i-1}}{Q_{n_i}}|\}$, we may assume that all $a_{n_i}$ are identical, and the period length of the expansion is $k$. From the proof of lemma \ref{lem@}, we have $\lim\limits_{n_i\to\infty} \frac{Q_{n_i}}{Q_{n_i-1}}=[\overline{a_{n_i},a_{n_i-1},\cdots,a_{n_i-k+1}}]$, and $\xi_{n_i+1}=[\overline{a_{n_i+1},\cdots,a_{n_i+k}}]$. Assume the limit of  $[\overline{a_{n_i},a_{n_i-1},\cdots,a_{n_i-k+1}}]$ is $\gamma$, we shall prove that $-\frac{1}{\gamma}\neq\xi_{n_i+1}$.\\
    Let $P_1=[\overline{a_{n_i+1},\cdots,a_{n_i+k}}]=[\overline{a_{n_i+k-1},\cdots,a_{n_i}}]$, $P_2=[\overline{a_{n_i},a_{n_i-1},\cdots,a_{n_i-k+1}}]$. Since $P_1$ and $P_2$ are purely periodic, we have 
$E(P_i)=M(P_i)$. According to the discussion in the proof of Lemma \ref{lem@}, if we assume  $E(P_1)=M(P_1)=\left[\begin{array}{ll}
E_{11} & E_{12} \\
E_{21} & E_{22}
\end{array}\right]$, then $E(P_2)=M(P_2)=E(P_1)^T=\left[\begin{array}{ll}
E_{11} & E_{21} \\
E_{12} & E_{22}
\end{array}\right]$. Since $\xi_{n_i+1}$ is the limit of $P_1$, it is a root of $f(P_1)$ which satisfies $|E_{21}\xi_{n_i+1}+E_{22}|>1$, by Algorithm \ref{alg1}. Denote the other root of $f(P_1)$ by $\xi_{n_i+1}^*$, hence $|E_{21}\xi_{n_i+1}^*+E_{22}|<1$. Without loss of generality, we may assume $\xi_{n_i+1}=\frac{E_{11}-E_{22}+\sqrt{(E_{11}-E_{22})^2+4E_{21}E_{12}}}{2E_{21}}$; the other case can be proven in a similar way. Then $-\frac{1}{\xi_{n_i+1}}=\frac{E_{11}-E_{22}-\sqrt{(E_{11}-E_{22})^2+4E_{21}E_{12}}}{2E_{12}}$, and $|-\frac{1}{\xi_{n_i+1}}\cdot E_{12}+E_{22}|=|\frac{E_11+E_{22}+\sqrt{(E_{11}-E_{22})^2+4E_{21}E_{12}}}{2}|=|\xi_{n_i+1}^*E_{21}+E_{22}|<1$. Therefore, $\gamma\neq-\frac{1}{\xi_{n_i+1}}$, since $|\gamma E_{12}+E_{22}|>1$.\\
This claim allows us to assume $\frac{1}{|\xi_{n+1}+\frac{Q_{n-1}}{Q_n}|}\in[t_1,t_2]$, and $|\frac{Q_n}{Q_{n-1}}|\in[t_3,t_4]$ by Corollary \ref{cor!}, where both $t_1,t_3\geq0$. Therefore, 
for any $\epsilon>0$, there exists an $n_\epsilon$ such that for all $n\geq n_\epsilon$, we have
$$\begin{aligned}
    F_1(n) & \leq \max\{|\xi Q_n-P_n|,|\xi Q_{n-1}-P_{n-1}|\}\cdot\max\{|Q_n|,|Q_{n-1}|\}\cdot\max\{|\tau_1(\xi)-\frac{P_n}{Q_n}|,|\tau_1(\xi)-\frac{P_{n-1}}{Q_{n-1}}|\}\\
    & \leq t_2 t_4(|\tau_1(\xi)-\xi|+\epsilon),
\end{aligned}
$$
and similarly
$$F_1(n)\geq t_1 t_3 (|\tau_1(\xi)-\xi|-\epsilon).$$
Therefore, we can choose $\epsilon$ to be sufficiently small, such that $|F_1(n)|$ is bounded and has a lower bound greater than zero, thus proving the lemma.
\end{proof}
The following theorem will give a sufficient and necessary condition for the periodicity of a continued fraction expansion over K.
\begin{thm}\label{thmbound}
Assume a continued fraction expansion $[a_0,\cdots,a_n,\cdots]$ with all partial quotients in $\mathcal{O}_K$ converges to a real quartic irrational $\xi$, which is quadratic over K, and the minimal polynomial of $\xi$ over $K$ is $Ax^2+Bx+C=0$, then it is ultimately periodic if and only if $|A_n|$ and $|\sigma(A_n)|$ are bounded for all $n$, where $A_n=A P_n^2+B P_n Q_n+C Q_n^2$ as defined before.
\end{thm}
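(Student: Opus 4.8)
The plan is to treat the two implications separately; in both, the decisive link between periodicity and the boundedness of $|A_n|,|\sigma(A_n)|$ is Northcott's theorem (Proposition~\ref{prop4}), applied to the complete quotients $\xi_{n+1}$, all of which lie in the fixed quartic field $L=K(\sqrt{B^2-4AC})$. Concretely, ultimate periodicity is equivalent to the set $\{\xi_{n+1}\}_n$ being finite, and since $[L:\mathbb{Q}]=4$ is fixed, Proposition~\ref{prop4} makes this equivalent to $H(\xi_{n+1})$ being bounded. (Here I use the derivation's indexing, where $A_{n+1}=AP_n^2+BP_nQ_n+CQ_n^2$ is the sequence in question.)

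For necessity, I would assume the expansion is ultimately periodic, so that Lemma~\ref{lembound} applies and both $F_1(n)$ and $F_2(n)$ are bounded. Since $S_n$ is a root of $Ax^2+(BQ_n+2AP_n)x+A_{n+1}=0$, its product with the conjugate root gives $|S_n\tau_1(S_n)|=|A_{n+1}|/|A|$; combining this with the identity $\prod_\phi|\phi(S_n)|=|A_{n+1}\sigma(A_{n+1})|/|A\sigma(A)|$ and dividing out the $\{1,\tau_1\}$-factor yields $|\sigma\tau_2(S_n)\,\sigma\tau_3(S_n)|=|\sigma(A_{n+1})|/|\sigma(A)|$. Because $F_1(n)\ge|S_n|\,|\tau_1(S_n)|$ and $F_2(n)\ge|\sigma\tau_2(S_n)|\,|\sigma\tau_3(S_n)|$, I obtain $|A_{n+1}|\le|A|F_1(n)$ and $|\sigma(A_{n+1})|\le|\sigma(A)|F_2(n)$, both bounded. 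It is worth noting that Northcott alone would only bound the norm $|A_{n+1}\sigma(A_{n+1})|$; it is precisely the separation of $F_1$ and $F_2$ supplied by Lemma~\ref{lembound} that produces the two one-sided bounds.

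For sufficiency, I would assume $|A_n|,|\sigma(A_n)|$ are bounded and aim to bound $H(\xi_{n+1})$. Since $C_{n+1}=A_n$, the quantities $|C_{n+1}|,|\sigma(C_{n+1})|$ are bounded as well, so only $B_{n+1}$ remains. The crucial observation is that the discriminant of $f_{n+1}$ is invariant: writing the associated quadratic forms in matrix shape gives
\[\begin{pmatrix}A_{n+1}&B_{n+1}/2\\ B_{n+1}/2&C_{n+1}\end{pmatrix}=M_n^{\top}\begin{pmatrix}A&B/2\\ B/2&C\end{pmatrix}M_n,\qquad M_n=\begin{pmatrix}P_n&P_{n-1}\\ Q_n&Q_{n-1}\end{pmatrix},\]
and since $\det M_n=(-1)^{n+1}$ by Proposition~\ref{prop1}, taking determinants yields $B_{n+1}^2-4A_{n+1}C_{n+1}=B^2-4AC$, a fixed constant. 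Hence $|B_{n+1}|^2\le 4|A_{n+1}|\,|A_n|+|B^2-4AC|$ is bounded, and the same argument applied to $\sigma$-conjugates bounds $|\sigma(B_{n+1})|$.

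With all of $|A_{n+1}|,|B_{n+1}|,|C_{n+1}|$ and their $\sigma$-conjugates bounded, the heights $H(A_{n+1}),H(B_{n+1}),H(C_{n+1})$ are bounded, because for $\beta\in\mathcal{O}_K$ one has $H(\beta)^2=\max\{1,|\beta|\}\max\{1,|\sigma(\beta)|\}$ by Proposition~\ref{prop5}. Writing $\xi_{n+1}=(-B_{n+1}+\sqrt{B^2-4AC})/(2A_{n+1})$, where invariance lets me use the constant radical $\sqrt{B^2-4AC}$, and applying the sub-multiplicativity, the factor-of-two additivity bound, and the scaling rule of Proposition~\ref{prop3} (the factor $H(\sqrt{B^2-4AC})$ being a fixed constant) bounds $H(\xi_{n+1})$ independently of $n$; Proposition~\ref{prop4} then forces $\{\xi_{n+1}\}$ to be finite, so $\xi_m=\xi_{m'}$ for some $m<m'$ and the expansion is ultimately periodic. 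I expect the main obstacle to be exactly this sufficiency direction: the hypothesis constrains only the $A$'s, and the real work is recognizing that $C_{n+1}=A_n$ together with discriminant invariance upgrades this to control of every coefficient of $f_{n+1}$, after which the height estimate and Northcott are routine.
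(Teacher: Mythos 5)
Your proposal is correct and takes essentially the same route as the paper's proof: necessity via Lemma~\ref{lembound} together with the norm identities $|S_n\,\tau_1(S_n)|=|A_{n+1}|/|A|$ and $|\sigma\tau_2(S_n)\,\sigma\tau_3(S_n)|=|\sigma(A_{n+1})|/|\sigma(A)|$, and sufficiency by combining $C_{n+1}=A_n$ with the invariance of the discriminant to bound every coefficient of $f_{n+1}$ and $\sigma(f_{n+1})$, then invoking Northcott (Proposition~\ref{prop4}) on the complete quotients. The only differences are cosmetic: the paper finishes by bounding the naive height (the integer coefficients of $f_{n+1}\cdot\sigma(f_{n+1})$) and citing its equivalence with the Weil height, whereas you bound $H(\xi_{n+1})$ directly from the quadratic formula via Propositions~\ref{prop3} and~\ref{prop5}, and your determinant computation of $B_{n+1}^2-4A_{n+1}C_{n+1}=B^2-4AC$ makes explicit what the paper merely asserts.
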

\begin{proof}
    The necessity of the proof is straightforward, since $F_1(n)\geq|(\xi Q_n-P_n)\cdot(\tau_1(\xi) Q_n-P_n)|=|A_n|$, and $F_2(n)\geq|(\tau_2\sigma(\xi) \sigma(Q_n)-\sigma(P_n))\cdot(\tau_3\sigma(\xi) \sigma(Q_n)-\sigma(P_n))|=|\sigma(A_n)|$, and $|F_i(n)|$ are bounded by the Lemma \ref{lembound}. \\
    To prove the sufficiency, we recall the definition of naive height. Let $\alpha$ be an algebraic number of degree $d$, if the minimal polynomial of $\alpha$ has integer coefficients $c_0, \ldots, c_d$, then the naive height of $\alpha$ is defined as 
$$
h(\alpha)=\max \left\{\left|c_i\right|: i=0, \ldots, d\right\}.
$$
The connection between naive height and (multiplicative) Weil height concerning algebraic integers is well-established. Specifically, the naive height and (multiplicative) Weil height are equivalent for $\alpha\in\mathbb{Q}$. 
 For $\alpha\in\overline{\mathbb{Q}}$ with degree $d$,  it can be observed that: $$ H(\alpha) \leq (d+1)^{1 / 2} h(\alpha) \quad \mathrm{and} \quad h(\alpha) \leq 2^d H(\alpha). $$ 
 Details can be found in \cite{diophantine}.
As mentioned previously, periodicity is equivalent to the finite existence of $\xi_n$, which was originally established by the bound on the Weil height. The aforementioned observation implies that we can alternatively prove this finiteness with the bound on the naive height.\\
Recall that all $\xi_n$ are quartic irrationals with the minimal polynomial over $\mathbb{Z}$
$$f_{n}(x)\cdot\sigma(f_{n})(x)=0,$$
where $$
f_{n}(x)=A_{n} x^2+B_{n} x+C_{n},$$
and $C_{n}=A_{n-1}$ 
(Here $\sigma(f)$ denotes the action of $\sigma$ on every coefficient of the polynomial $f$). The discriminant of $f_{n}$ is equal to $B_{n}^2-4A_{n}C_{n}=\left(P_{n-1} Q_{n-2}-Q_{n-1} P_{n-2}\right)^2(B^2-4AC)=B^2-4AC$, hence the discriminant of $\sigma(f_{n})$ is $\sigma(B^2-4AC) $, both of which remain constant while verifying $n$. Hence assuming $|A_n|$ and $|\sigma(A_{n})|$ are bounded, we can obtain $B_{n}^2\leq(B^2-4AC)+4|A_{n-1} A_n|$ and $\sigma(B_n)^2\leq |\sigma(B^2-4AC)|+4|\sigma(A_{n-1}) \sigma(A_n)|$. As a result, both polynomials $f_{n}(x)$ and $\sigma(f_{n})(x)$ has bounded absolute values of the coefficients, which suggests that the absolute values of the coefficients of the polynomial product are bounded.
\end{proof}

\begin{cor}\label{cor<0}
    With all the assumptions in Theorem \ref{thmbound}, if $\sigma(B^2-4AC)<0$ and  the continued fraction expansion is  ultimately periodic, then $|\sigma(P_n)|$ and $|\sigma(Q_n)|$ are bounded for all $n$.
\end{cor}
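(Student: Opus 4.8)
The plan is to read $\sigma(A_n)$ as the value of a fixed binary quadratic form evaluated at the conjugate convergents $(\sigma(P_n),\sigma(Q_n))$, and to exploit that a negative discriminant forces this form to be definite. Applying $\sigma$ to the defining relation $A_n = AP_n^2 + BP_nQ_n + CQ_n^2$ gives
$$\sigma(A_n) = \sigma(A)\,\sigma(P_n)^2 + \sigma(B)\,\sigma(P_n)\sigma(Q_n) + \sigma(C)\,\sigma(Q_n)^2,$$
so if I set $q(u,v) := \sigma(A)u^2 + \sigma(B)uv + \sigma(C)v^2$, then $\sigma(A_n) = q(\sigma(P_n),\sigma(Q_n))$. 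Because $K$ is a real quadratic field, $\sigma$ is a real embedding, and since $P_n,Q_n\in\mathcal{O}_K$ we have $\sigma(P_n),\sigma(Q_n)\in\mathbb{R}$; thus $q$ is a genuine real binary quadratic form evaluated at real arguments.

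Next I would observe that the discriminant of $q$ is exactly $\sigma(B)^2 - 4\sigma(A)\sigma(C) = \sigma(B^2-4AC)$, which is negative by hypothesis. Since $A\neq 0$ and $\sigma$ is an automorphism, $\sigma(A)\neq 0$; completing the square then shows $q$ is definite (its sign equals that of $\sigma(A)$, and it vanishes only at the origin). The standard compactness argument --- restricting the continuous, nonvanishing function $|q|$ to the unit circle $u^2+v^2=1$, where it attains a positive minimum $c$, and then invoking homogeneity of degree two --- yields the lower bound $|q(u,v)| \geq c(u^2+v^2)$ for all real $u,v$.

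To conclude, I would combine this with Theorem \ref{thmbound}: since the expansion is ultimately periodic, $|\sigma(A_n)|$ is bounded, say by $M$. Hence
$$c\left(\sigma(P_n)^2 + \sigma(Q_n)^2\right) \leq |q(\sigma(P_n),\sigma(Q_n))| = |\sigma(A_n)| \leq M,$$
forcing $\sigma(P_n)^2 + \sigma(Q_n)^2 \leq M/c$ and therefore bounding $|\sigma(P_n)|$ and $|\sigma(Q_n)|$ simultaneously for all $n$.

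There is no serious obstacle here; the entire content is the single observation that a negative conjugate discriminant makes the conjugate form definite, which is precisely what converts a bound on the one quantity $|\sigma(A_n)|$ into a simultaneous bound on $|\sigma(P_n)|$ and $|\sigma(Q_n)|$. The only points deserving care are verifying that $\sigma(P_n),\sigma(Q_n)$ are real (guaranteed by $K$ being real) and that $\sigma(A)\neq 0$, so that the definiteness and the attendant lower bound are legitimate. It is worth noting that this is exactly where the sign hypothesis is used: when $\sigma(B^2-4AC)>0$ the form $q$ is indefinite and a bound on $|\sigma(A_n)|$ alone does not control $\sigma(P_n)$ and $\sigma(Q_n)$, so the conclusion genuinely relies on $\sigma(B^2-4AC)<0$.
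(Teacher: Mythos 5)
Your proposal is correct and takes essentially the same approach as the paper: both invoke Theorem \ref{thmbound} to bound $|\sigma(A_n)|$ and then use the fact that $\sigma(B^2-4AC)<0$ makes the conjugate quadratic form $q(u,v)=\sigma(A)u^2+\sigma(B)uv+\sigma(C)v^2$ definite, so that a bound on $|\sigma(A_n)|=|q(\sigma(P_n),\sigma(Q_n))|$ forces bounds on $|\sigma(P_n)|$ and $|\sigma(Q_n)|$. The only cosmetic difference is that the paper exhibits the definiteness by completing the square, $\sigma(A_{n+1})=\sigma(A)\bigl[\sigma\bigl(P_n+\tfrac{B}{2A}Q_n\bigr)^2+\sigma\bigl(\tfrac{4AC-B^2}{4A^2}\bigr)\sigma(Q_n)^2\bigr]$, while you derive the equivalent lower bound $|q(u,v)|\geq c\,(u^2+v^2)$ by compactness and homogeneity.
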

\begin{proof}
If $\sigma(B^2-4AC)<0$, then $\sigma(A_{n+1})=\sigma(A)[\sigma(P_n-\frac{B}{2A}Q_n)^2+\sigma(AC-\frac{B^2}{4})\cdot\sigma(Q_n)^2]$, and the absolute value is bounded if and only if $|\sigma(Q_n)|,\ |\sigma(P_n-\frac{B}{2A}Q_n))|$ are bounded, hence if the expansion is ultimately periodic, both $|\sigma(P_n)|$ and $|\sigma(Q_n)|$ are bounded.
\end{proof}

\begin{cor}\label{correstrictions<0}
   With all the assumptions in Theorem \ref{cor<0}, and denote $\xi=[a_1,\cdots,a_N,\overline{a_{N+1},\cdots,a_{N+k}}]$, then the period length $k$ is even, and $|\sigma(B^2-4AC)|<4$. Conversely speaking, if some $\xi$ has minimal polynomial $Ax^2+Bx+C=0$ over $\mathcal{O}_K$ with $\sigma(B^2-4AC)<-4$, then $\xi$ does not have any periodic continued fraction expansions in K.
\end{cor}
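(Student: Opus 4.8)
The plan is to pass to the \emph{conjugate dynamics} attached to $\sigma$ and exploit that $\sigma(\Delta)<0$ forces a non-real fixed point. Write $\Delta=B^2-4AC$, $L=K(\sqrt{\Delta})$, and fix the embedding $\phi\colon L\hookrightarrow\mathbb{C}$ that restricts to $\sigma$ on $K$ and sends $\sqrt{\Delta}$ to $\sqrt{\sigma(\Delta)}\in i\mathbb{R}$ (one of $\sigma\tau_2,\sigma\tau_3$ from the discussion preceding Lemma~\ref{lembound}). For any index $m$ in the periodic tail the complete quotient $\xi_m$ is purely periodic, so $\xi_{m+k}=\xi_m$ gives the fixed-point relation determined by the period matrix $M:=M([a_m,\dots,a_{m+k-1}])$, whose entries lie in $\mathcal{O}_K$ and whose determinant is $(-1)^k$. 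Applying $\phi$ turns this into the statement that $\tilde{\xi}_m:=\phi(\xi_m)$ is a fixed point of the real Möbius transformation given by $\sigma(M)$, and $\tilde{\xi}_m\notin\mathbb{R}$ precisely because $\sigma(\Delta)<0$. Since the fixed points of a real matrix $N$ are the roots of $N_{21}x^2+(N_{22}-N_{11})x-N_{12}$, with discriminant $\operatorname{tr}(N)^2-4\det(N)$, a non-real fixed point forces $\operatorname{tr}(\sigma(M))^2-4(-1)^k<0$. If $k$ were odd this would read $\operatorname{tr}(\sigma(M))^2<-4$, which is impossible; hence $k$ is even.

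For the bound I would run the same fixed-point analysis on the \emph{fundamental} (minimal) period matrix $M_0=M([a_m,\dots,a_{m+k_0-1}])$, which is again even, so $\det(\sigma(M_0))=1$. The eigenvalues $\mu,\mu'$ of $M_0$ satisfy $\mu\mu'=1$ and $\mu+\mu'=\operatorname{tr}(M_0)$, with $\mu=\hat Q_{k_0-1}\xi_m+\hat Q_{k_0-2}$; since $\xi_m$ and the other root of $f_m(x)=A_mx^2+B_mx+C_m$ differ by $\sqrt{\Delta}/A_m$ and $\operatorname{disc}(f_m)=\Delta$, one computes
$$\operatorname{tr}(M_0)^2-4=(\mu-\mu')^2=\hat Q_{k_0-1}^2(\xi_m-\xi_m^{\dagger})^2=\frac{\hat Q_{k_0-1}^2}{A_m^2}\,\Delta.$$
Thus, once we know $\hat Q_{k_0-1}=\pm A_m$, we obtain the clean identity $\Delta=\operatorname{tr}(M_0)^2-4$. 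Applying $\sigma$ and using $\operatorname{tr}(\sigma(M_0))=\sigma(\operatorname{tr}(M_0))$ gives $\sigma(\Delta)=\operatorname{tr}(\sigma(M_0))^2-4$. Now $\operatorname{tr}(M_0)=0$ would force $\Delta=-4<0$, contradicting $\Delta>0$, so by injectivity of $\sigma$ we have $\operatorname{tr}(\sigma(M_0))\neq 0$, whence $\sigma(\Delta)>-4$; together with the hypothesis $\sigma(\Delta)<0$ this yields $-4<\sigma(\Delta)<0$, i.e. $|\sigma(\Delta)|<4$. The converse is then immediate by contraposition: if $\sigma(B^2-4AC)<-4$ then $|\sigma(\Delta)|>4$, and an ultimately periodic expansion over $K$ would (by Corollary~\ref{cor<0}) force $|\sigma(\Delta)|<4$, a contradiction.

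The main obstacle is exactly the arithmetic identity $\hat Q_{k_0-1}=\pm A_m$, equivalently $\Delta=\operatorname{tr}(M_0)^2-4$ for the fundamental period. Two subtleties must be handled. First, one genuinely needs the minimal period $k_0$: a non-fundamental period may produce a fixed-point polynomial with nontrivial content (as happens classically, e.g. $[\overline{2,2}]$ versus $[\overline{2}]$), inflating the discriminant to $c^2\Delta$ with $c$ a non-unit and breaking the identity. Second, $Ax^2+Bx+C$ must be the primitive integral minimal polynomial, since a common factor of $A,B,C$ rescales $\Delta$ by a square. Establishing $\hat Q_{k_0-1}=\pm A_m$ is the analogue over $\mathcal{O}_K$ of the classical correspondence between purely periodic continued fractions and reduced quadratic irrationals (fundamental automorphs of the associated quadratic form); transporting it to partial quotients in $\mathcal{O}_K$, where signs and the unit group of $\mathcal{O}_K$ obscure the notion of ``reduced'', is where the real work lies. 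I would attack it through the continuant symmetry of Proposition~\ref{prop2} together with the determinant relation $\det(M_0)=1$, which is what ultimately controls the content of the fixed-point polynomial and hence pins down $\hat Q_{k_0-1}$ up to sign.
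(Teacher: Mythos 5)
Your parity argument is correct and is essentially the paper's own: the paper computes the discriminant of the fixed-point polynomial of $E(P)$ as $\Delta(f(P))=(E_{11}+E_{22})^2+4(-1)^{k-1}$ and notes that for odd $k$ its image under $\sigma$ would be positive, contradicting $\sigma(B^2-4AC)<0$; your version, phrased via the non-real fixed point of $\sigma(M)$ for the period matrix of the tail, is the same trace--determinant computation.

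The bound $|\sigma(B^2-4AC)|<4$ is where your proposal has a genuine gap, and the problem is worse than the deferral you acknowledge: the identity $\hat Q_{k_0-1}=\pm A_m$ you plan to prove is \emph{false} over $\mathcal{O}_K$, even for the minimal period. The paper's own example in Section \ref{sec4} is a counterexample: $\xi=1+\sqrt{\beta^2+1}=[\,\overline{2,4-2\beta}\,]$ over $K=\mathbb{Q}(\sqrt{5})$ has primitive minimal polynomial $x^2-2x-\beta^2$ (so $A_m=1$ at the start of the period), minimal period $k_0=2$, and period matrix $M_0=\begin{pmatrix}9-4\beta & 2\\ 4-2\beta & 1\end{pmatrix}$, so $\hat Q_{k_0-1}=4-2\beta=2\beta^{-2}\neq\pm1$; the fixed-point polynomial is $(4-2\beta)(x^2-2x-\beta^2)$, with content ideal $(2)$ despite the period being minimal. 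So over $\mathcal{O}_K$ minimality of the period does not force primitivity of the fixed-point polynomial --- precisely the unit/sign obstruction you flagged, but it is fatal to the plan rather than merely ``the real work'' (the example has $\sigma(\Delta)>0$, but your identity is proposed as a general structural fact and nothing in your outline invokes the sign of $\sigma(\Delta)$ to rescue it). Note also that the full identity is more than is needed: since for even $k$ one has $\sigma(\Delta(f(P)))=\sigma(\operatorname{tr}E)^2-4\geq-4$, with strict inequality because $\operatorname{tr}E=0$ would force $\Delta(f(P))=-4<0$, impossible for a real $\xi$, the only missing step is transferring this inequality from $\Delta(f(P))=(E_{21}/A)^2(B^2-4AC)$ to $B^2-4AC$ itself, i.e.\ a lower bound $|\sigma(E_{21}/A)|\geq1$. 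The paper's proof takes the shortcut of identifying $\Delta(f(P))$ with $B^2-4AC$ outright (in effect normalizing the minimal polynomial to be the fixed-point polynomial of the expansion), under which reading the bound is immediate; under your more careful reading, where $Ax^2+Bx+C$ is a fixed primitive polynomial given in advance, this transfer is exactly what remains unproven, and the route you propose for it cannot succeed.
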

\begin{proof}
   Denote this periodic expansion converging to $\xi$ by $P$. From Algorithm \ref{alg1},  $\xi$ is the root of $f(P)=E_{21}(P)x^2+(E_{22}(P)-E_{11}(P))x-E_{12}(P)=0$, and the  discriminant of $f(P)$ is $\Delta(f(P))=(E_{22}(P)-E_{11}(P))^2+4E_{21}(P)E_{12}(P)=(E_{22}(P)+E_{11}(P))^2+4(-1)^{k-1} $. Therefore, if $k$ is odd, then $\sigma(\Delta(f(P)))=\sigma(B^2-4AC)>0$, contradicting the assumption. Furthermore, if $\sigma(B^2-4AC)<-4$, then $B^2-4AC$ cannot coincide with any $\Delta(f(P))$.
\end{proof}
Next we give a necessary and sufficient condition for $\xi$ to have an ultimately periodic continued fraction expansion in K, where the algebraic conjugates of $\xi$ are all real. This theorem is a restatement of Theorem \ref{thmbound}, which reveals to some extent the property of continued fractions as a means of approximating real numbers. 
\begin{thm}\label{thm>0}
With all the assumptions in Theorem \ref{thmbound}, if $\sigma(B^2-4AC)>0$ and the continued fraction expansion is ultimately periodic, then $|\xi Q_n-P_n|$ and $|\xi^\prime\sigma(Q_n)-\sigma(P_n)|$ are bounded. Moreover, there exist positive constants $M_1,M_2$, and a (real) root $\xi^\prime$ of the equation $\sigma(A)x^2+\sigma(B)x+\sigma(C)=0$, such that 
$$|\xi Q_n-P_n|\leq \dfrac{M_1}{|Q_n|},$$
$$|\xi^\prime \sigma(Q_n)-\sigma(P_n)|\leq\dfrac{M_2}{|\sigma(Q_n)|}.$$
The two conditions is also sufficient for a continued fraction expansion of such a $\xi$ to be ultimately periodic.
\end{thm}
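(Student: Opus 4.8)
The plan is to read the two displayed inequalities as the assertion that the convergents of $P$ and of its $\sigma$-conjugate expansion approximate, at the optimal rate, a root of $Ax^2+Bx+C$ and a root of $\sigma(A)x^2+\sigma(B)x+\sigma(C)$ respectively, and then to convert between this approximation statement and the boundedness of $|A_n|,|\sigma(A_n)|$ supplied by Theorem \ref{thmbound}. Throughout I use $A_{n+1}=A(\xi Q_n-P_n)(\xi^* Q_n-P_n)$, where $\xi^*$ is the second root of $Ax^2+Bx+C$, and its $\sigma$-image $\sigma(A_{n+1})=\sigma(A)(\xi'\sigma(Q_n)-\sigma(P_n))(\xi''\sigma(Q_n)-\sigma(P_n))$, with $\xi',\xi''$ the roots of $\sigma(A)x^2+\sigma(B)x+\sigma(C)$.

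For necessity (both the boundedness and the sharp $M_i$-inequalities) the key observation is that applying $\sigma$ to every partial quotient produces a second ultimately periodic expansion $\sigma(P)=[\sigma(a_0),\dots,\sigma(a_N),\overline{\sigma(a_{N+1}),\dots,\sigma(a_{N+k})}]$, whose $\mathcal{Q}$-pair is exactly $(\sigma(P_n),\sigma(Q_n))$ and whose associated polynomial is $\sigma(f(P))$, of discriminant $\sigma(B^2-4AC)>0$, non-square in $K$. Hence Lemma \ref{lem!} applies to $\sigma(P)$ as well and shows it converges to a real root $\xi'$ of $\sigma(A)x^2+\sigma(B)x+\sigma(C)=0$, with $|\sigma(Q_n)|\to\infty$ by Lemma \ref{lem@}. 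I would then run the estimate of Proposition \ref{prop1}(2) simultaneously for $P$ and for $\sigma(P)$: for $P$ one has $|\xi Q_n-P_n|=\big(|Q_n|\cdot|\xi_{n+1}+Q_{n-1}/Q_n|\big)^{-1}$ with the factor $|\xi_{n+1}+Q_{n-1}/Q_n|$ bounded below (this is precisely what is established inside the proofs of Lemma \ref{lembound} and Corollary \ref{cor!}), giving $|\xi Q_n-P_n|\le M_1/|Q_n|\to0$; the identical computation applied to $\sigma(P)$, whose complete quotients are the $\sigma$-images of those of $P$, gives $|\xi'\sigma(Q_n)-\sigma(P_n)|\le M_2/|\sigma(Q_n)|\to0$. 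This yields the boundedness and the two explicit inequalities at once.

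For sufficiency I would reduce to Theorem \ref{thmbound} by showing the two inequalities force $|A_n|$ and $|\sigma(A_n)|$ to be bounded. Using $\xi^* Q_n-P_n=(\xi^*-\xi)Q_n+(\xi Q_n-P_n)$ gives $|A_{n+1}|\le|A|\big(|\xi^*-\xi|\cdot|\xi Q_n-P_n|\cdot|Q_n|+|\xi Q_n-P_n|^2\big)$. Inequality (1) bounds the first term by $|\xi^*-\xi|M_1$, and the exact identity $|\xi Q_n-P_n|=|Q_n|\cdot|\xi-P_n/Q_n|$ combined with (1) gives $|\xi Q_n-P_n|^2\le M_1\,|\xi-P_n/Q_n|$, which is bounded (indeed tends to $0$) simply because the expansion converges to $\xi$. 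Thus $|A_n|$ is bounded with no further input. The symmetric manipulation using inequality (2) yields $|\sigma(A_{n+1})|\le|\sigma(A)|\big(|\xi''-\xi'|M_2+|\xi'\sigma(Q_n)-\sigma(P_n)|^2\big)$, and once $|\sigma(A_n)|$ is likewise bounded, Theorem \ref{thmbound} delivers ultimate periodicity.

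The hard part is the last step, and I expect it to be the main obstacle. Unlike on the $\xi$-side there is no hypothesis guaranteeing that $\sigma(P_n)/\sigma(Q_n)$ converges, so I cannot bound $|\xi'\sigma(Q_n)-\sigma(P_n)|^2$ for free: inequality (2) only gives $|\xi'\sigma(Q_n)-\sigma(P_n)|^2\le M_2\,|\xi'-\sigma(P_n)/\sigma(Q_n)|$, which is controlled exactly when $|\sigma(Q_n)|$ stays bounded away from $0$. One must therefore exclude a subsequence with $\sigma(Q_{n_i})\to0$ (equivalently $A_{n_i}\to0$, equivalently $|\sigma(A_{n_i})|\to\infty$); the archimedean convergence $P_n/Q_n\to\xi$ says nothing about the conjugate embedding, and the determinant identity $\sigma(P_{n-1})\sigma(Q_n)-\sigma(P_n)\sigma(Q_{n-1})=(-1)^n$ is by itself consistent with such degeneration. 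My plan to close the gap is to combine three facts: that the matrix with rows $(P_n,P_{n-1})$ and $(Q_n,Q_{n-1})$ lies in $GL_2(\mathcal{O}_K)$; the two-sided bound on $|N_{K/\mathbb{Q}}(A_n)|$ obtained by multiplying (1) and (2) over the four archimedean embeddings of $L=K(\sqrt{B^2-4AC})$, where the small factors satisfy $|\xi Q_n-P_n|\cdot|\xi'\sigma(Q_n)-\sigma(P_n)|\le M_1M_2/|N(Q_n)|$ and the complementary large factors are $O(|N(Q_n)|)$, so that $|N(Q_n)|\ge1$ cancels; and the discreteness of the lattice $x\mapsto(x,\sigma(x))$ in $\mathbb{R}^2$, which, once $|A_n|$ and $|\sigma(A_n)|$ are both bounded, confines $A_n$ and hence $B_n$ and $C_n=A_{n-1}$ (through the fixed relation $B_n^2-4A_nC_n=B^2-4AC$) to a finite set, i.e.\ forces finitely many complete quotients, exactly the hypothesis that Proposition \ref{prop4} converts into periodicity. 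Making this degeneration-exclusion rigorous, so that the conjugate denominators cannot collapse, is where the genuine work lies.
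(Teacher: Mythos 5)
Your necessity argument is the paper's own (conjugate every partial quotient, apply Lemma \ref{lem!} to $\sigma(P)$, and use the bound on $|q_n|\cdot|\alpha q_n-p_n|=1/|\alpha_{n+1}+q_{n-1}/q_n|$ established in the proof of Lemma \ref{lembound}), and your $\xi$-side sufficiency computation is the paper's in factored form: the paper substitutes $P_n=\xi Q_n+M_1\delta_n/Q_n$ where you expand $A_{n+1}=A(\xi Q_n-P_n)(\xi^*Q_n-P_n)$, and your bound $|\xi Q_n-P_n|^2\le M_1|\xi-P_n/Q_n|$ via convergence is a valid substitute for the paper's treatment of the case $|Q_n|<1$.

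The genuine gap is exactly the one you flag on the conjugate side, and it is created by your reading of the hypotheses. The theorem's ``two conditions'' for sufficiency are (i) the boundedness of $|\xi Q_n-P_n|$ and of $|\xi'\sigma(Q_n)-\sigma(P_n)|$, and (ii) the two displayed $M_1,M_2$-inequalities; you retained only (ii). The paper's proof uses both, split by cases. When $|\sigma(Q_n)|\ge 1$, inequality (ii) alone gives $|\sigma(A_{n+1})|\le|\sigma(A)|\left(M_2|\xi''-\xi'|+M_2^2\right)$, just as in your computation. When $|\sigma(Q_n)|<1$, the paper invokes (i): boundedness of $|\xi'\sigma(Q_n)-\sigma(P_n)|$ forces $|\sigma(P_n)|\le|\xi'|+\sup_m|\xi'\sigma(Q_m)-\sigma(P_m)|$, and then $|\sigma(A_{n+1})|=|\sigma(A)\sigma(P_n)^2+\sigma(B)\sigma(P_n)\sigma(Q_n)+\sigma(C)\sigma(Q_n)^2|$ is bounded outright, because both $|\sigma(P_n)|$ and $|\sigma(Q_n)|$ are bounded on that set of indices. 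In other words, the collapse scenario you cannot exclude ($\sigma(Q_{n_i})\to 0$ with $|\sigma(P_{n_i})|\sim M_2/|\sigma(Q_{n_i})|$) is ruled out by hypothesis, not by argument, and none of the machinery you propose ($GL_2(\mathcal{O}_K)$, norm products, lattice discreteness) is needed. Note also that the two readings of the hypotheses coincide precisely when $|Q_n|,|\sigma(Q_n)|\ge 1$, so the regime where you are stuck is exactly the regime where they differ.

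As it stands, your proposed repair is also not sound as sketched: the asserted chain ``$\sigma(Q_{n_i})\to 0$, equivalently $A_{n_i}\to 0$, equivalently $|\sigma(A_{n_i})|\to\infty$'' is unjustified. From $\sigma(Q_{n_i})\to 0$ and the $M_1$-inequality one only gets $|A_{n_i+1}|\le|A|\left(M_1|\xi^*-\xi|+M_1^2/|Q_{n_i}|^2\right)$, i.e.\ boundedness, not decay to $0$; and $|\sigma(A_n)|\to\infty$ yields $A_n\to 0$ only if $|N_{K/\mathbb{Q}}(A_n)|$ is bounded above, which is not available at that stage of your argument. So either restore hypothesis (i), after which your troublesome case closes in one line exactly as in the paper, or recognize that you are attempting a strictly stronger statement (sufficiency of the inequalities alone), whose truth the paper neither claims nor needs.
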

\begin{rmk}\label{rmk>0}
In fact, we can obtain the stronger result that $|Q_n|$, $|\sigma(Q_n)|$ tend to infinity by Lemma \ref{lem@}, assuming periodicity. However, but it suffices to use the boundedness of $|\xi Q_n-P_n|$ and $|\xi^\prime\sigma(Q_n)-\sigma(P_n)|$ to prove that the continued fraction expansion is periodic.
\end{rmk}
\begin{proof}
   Denote the ultimately periodic continued fraction $[a_0,a_1,\cdots]$ which converges to $\xi$ by $P$, then $\sigma(P)=[\sigma(a_0),\sigma(a_1),\cdots]$ is also an ultimately periodic continued fraction. Since $\Delta(f(P))=B^2-4AC$, it follows that $\Delta(f\sigma(P)))=\Delta(\sigma(f(P)))=\sigma(B^2-4AC)>0$. Lemma \ref{lem!} implies that  it converges to a root of $f(\sigma(P))=\sigma(A)x^2+\sigma(B)x+\sigma(C)=0$. According to the proof of Lemma \ref{lembound}, it is evident that for any periodic continued fraction converging to some $\alpha$ with $\mathcal{Q}$-pairs $p_n,q_n$, $|q_n|\cdot|\alpha q_n-p_n|=\frac{1}{|\alpha_{n+1}+\frac{q_{n-1}}{q_n}|}$ is bounded, hence it holds for the expansion $P$ and $\sigma(P)$.\\
   To prove the sufficiency, we firstly prove the boundedness of $|A_n|$.
   Now we have $|\xi Q_n-P_n|\leq\frac{M_1}{|Q_n|}$, hence we may assume
   $$P_n=\xi Q_n+\dfrac{M_1 \delta_n}{Q_n}\quad(|\delta_n|<1),$$
and we have
$$\begin{aligned}
    A_{n+1} & =A(\xi Q_n+\dfrac{M_1 \delta_n}{Q_n})^2+B\cdot(\xi Q_n+\dfrac{M_1\delta_n}{Q_n})\cdot Q_n+C\cdot Q_n^2\\
    & = (A\xi^2+B\xi+C)\cdot Q_n^2+(2A\xi+B)\cdot M_1\delta_n+A(\dfrac{M_1 \delta_n}{Q_n})^2.
\end{aligned}
$$
Since $\xi$ is the root of $Ax^2+Bx+C=0$, it follows that 
$$A_{n+1}=(2A\xi+B)\cdot M_1\delta_n+A(\dfrac{M_1 \delta_n}{Q_n})^2.$$
We examine the two cases where $|Q_n|<1$ and $|Q_n|\geq1$ separately. For $|Q_n|\geq 1$, we have
$$|A_{n+1} |\leq(|2A\xi|+|B|)\cdot M_1+|A|M_1^2.$$
On the other hand when $|Q_n|<1$, we observe that assuming $|\xi Q_n-P_n|$ is bounded, then so is $|P_n|$. As a result, $|A_{n+1}|$ is bounded in both cases. Since we can prove that $|\sigma(A_{n+1})|$ is bounded in the similar way, the condition stated in Theorem \ref{thmbound} is satisfied and the continued fraction expansion becomes ultimately periodic.
\end{proof}
\subsection{Constraints on the partial quotients}
In this subsection we establish some conclusions about periodicity by imposing additional constraints on some of the quotients.\\
As demonstrated in previous discussions, the indeterminate sign of the partial quotients  brings significant uncertainty to the analysis of periodicity. If we impose the condition that all $a_n\in\mathcal{O}_K\cap\mathbb{R}_{\geq1}$, as in some existing algorithms (e.g. $\beta$-continued fractions for some $\beta>1$), then from the remark of Proposition \ref{prop1} we know $\left|S_{n-1}|=|\xi Q_{n-1}-P_{n-1}\right|>\left|\xi Q_n-P_n \right|=|S_n|$, and $\frac{1}{Q_n+Q_{n+1}} \leq\left|S_n\right| \leq \frac{1}{Q_{n+1}}$. Therefore, $\frac{1}{2 Q_n} \leq \frac{1}{Q_n+Q_{n-1}} \leq\left|S_{n-1}\right| \leq \frac{1}{Q_n}$.
For $\phi=\tau_1$, it can be seen that 
$\left|\phi\left(S_n\right)\right|=\left|S_n+\left(\tau_1(\xi)-\xi\right) Q_n\right|$. Since $Q_n \rightarrow \infty$ and $S_n \rightarrow 0$, when $n$ is sufficiently large, we have $\left|\tau_1\left(S_n\right)\right| \sim\left(\tau_1(\xi)-\xi\right) Q_n$. Thus there exists an integer $N>0$ such that for all $n \geq N$, $\left|\tau_1\left(S_n\right)\right|>\left|\tau_1\left(S_{n-1}\right)\right|,\left(\tau_1(\xi)-\xi\right) Q_n-\frac{1}{Q_n} \leq\left|\tau_1\left(S_n\right)\right| \leq\left(\tau_1(\xi)-\xi\right) Q_n+\frac{1}{Q_n}$
Now for $n \geq N$ with $N$ chosen as above, we observe that $F_1(n)=\prod_{\phi \in\left\{1, \tau_1\right\}} \max \left\{\left|\phi\left(S_n\right)\right|,\left|\phi\left(S_{n-1}\right)\right|\right\}=$ $\left|S_{n-1}\right| \cdot\left|\tau_1\left(S_n\right)\right| \in\left[\frac{\tau_1(\xi)-\xi}{2}-\frac{1}{Q_n^2}, \tau_1(\xi)-\xi+\frac{1}{Q_n^2}\right]$, which has both upper and lower bounds. Consequently, the periodicity of the expansion is equivalent to the boundedness of $F_2(n)$.\\
\begin{thm}\label{thmconstraints}
Assume that a continued fraction expansion $[a_0,a_1,\cdots]$ converges to a real quartic irrational $\xi$, where all partial quotients $a_n$ are chosen such that $a_n\geq1, \sigma(a_n)>0$ for all $n\geq1$, then:
\begin{itemize}
    \item If all algebraic conjugates of $\xi$ are real numbers, then $\xi$ has an ultimately periodic continued fraction expansion in K if and only if there exists a lower bound for all $\sigma(a_n)$ when $n\geq1$ and $\frac{\sigma(P_n)}{\sigma(Q_n)}$ converges to one of the roots of the equation $\sigma(A)x^2+\sigma(B)x+\sigma(C)=0$. 
    \item If $\sigma(B^2-4AC)<0$, then $\xi$ does not have periodic continued fraction expansion in K.
\end{itemize}
\end{thm}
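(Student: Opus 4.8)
The plan is to reduce both bullets to the characterisation already available in Theorem \ref{thm>0} and Corollary \ref{cor<0}, exploiting the constraints $a_n\geq 1$ and $\sigma(a_n)>0$ to control the $\sigma$-conjugate expansion $\sigma(P)=[\sigma(a_0),\sigma(a_1),\dots]$. Applying $\sigma$ to the defining recurrences shows at once that the $\mathcal{Q}$-pair of $\sigma(P)$ is $(\sigma(P_n),\sigma(Q_n))$, and an immediate induction from $\sigma(a_n)>0$ gives $\sigma(Q_n)>0$ for all $n$. The one structural fact I would isolate first is that ultimate periodicity makes the partial quotients take only finitely many values, so $c:=\min_{n\geq 1}\sigma(a_n)>0$; combined with the convergence criterion recalled in Section \ref{sec2} (positive partial quotients, $\sum\sigma(a_n)=\infty$), this forces $\sigma(Q_n)\to\infty$ whenever the expansion is periodic.

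With this in hand the second bullet is the short one: I would argue by contradiction. If $\sigma(B^2-4AC)<0$ and the expansion were ultimately periodic, then $\sigma(Q_n)\to\infty$ by the observation above, while Corollary \ref{cor<0} forces $|\sigma(Q_n)|$ to be bounded — a contradiction. Note this conclusion is stronger than Corollary \ref{correstrictions<0}, the gain coming precisely from the sign constraints on the $a_n$.

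For the first bullet ($\sigma(B^2-4AC)>0$) I would prove the two directions separately, both anchored on Theorem \ref{thm>0}. For necessity, periodicity yields the lower bound $c>0$ on $\sigma(a_n)$, which is the first stated condition; moreover $\sigma(P)$ is then an ultimately periodic continued fraction with positive partial quotients bounded below, hence convergent, and since its associated discriminant $\sigma(B^2-4AC)$ is positive and not a square in $K$, Lemma \ref{lem!} identifies its limit as a root $\xi'$ of $\sigma(A)x^2+\sigma(B)x+\sigma(C)$, i.e. $\sigma(P_n)/\sigma(Q_n)\to\xi'$, the second condition. For sufficiency I would instead verify the two hypotheses of Theorem \ref{thm>0}. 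The bound $|\xi Q_n-P_n|\leq 1/Q_n$ is automatic from $a_n\geq 1$, via the estimate $|\xi Q_n-P_n|\leq 1/Q_{n+1}\leq 1/Q_n$ recorded in the remark following Proposition \ref{prop1}.

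The main obstacle — and the only place where both stated conditions are genuinely needed — is producing the companion bound $|\xi'\sigma(Q_n)-\sigma(P_n)|\leq M_2/|\sigma(Q_n)|$ from the bare hypotheses $\sigma(a_n)\geq c$ and $\sigma(P_n)/\sigma(Q_n)\to\xi'$; mere convergence of the conjugate convergents is not enough, since Theorem \ref{thm>0} requires the sharp rate $1/\sigma(Q_n)^2$. The idea I would use is to treat $\sigma(P)$ as a convergent continued fraction in its own right with value $\xi'$ and apply the second item of Proposition \ref{prop1} to it: writing $\eta_{n+1}$ for the complete quotient of $\sigma(P)$, one has $|\xi'-\sigma(P_n)/\sigma(Q_n)|=1/(\sigma(Q_n)(\eta_{n+1}\sigma(Q_n)+\sigma(Q_{n-1})))$, and since $\eta_{n+1}\geq\sigma(a_{n+1})\geq c$ and $\sigma(Q_{n-1})\geq 0$ this is at most $1/(c\,\sigma(Q_n)^2)$, giving $M_2=1/c$. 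Here the lower bound on $\sigma(a_n)$ enters twice — through $\sigma(Q_n)\to\infty$ and through $\eta_{n+1}\geq c$ — which is why it cannot be dropped. With both bounds established, the sufficiency half of Theorem \ref{thm>0} completes the proof.
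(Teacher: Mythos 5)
Your proposal is correct and follows essentially the same route as the paper: the second bullet via Khinchin's divergence criterion for positive partial quotients contradicting Corollary \ref{cor<0}, and the first bullet by verifying the two bounds of Theorem \ref{thm>0}, with $M_1=1$ from $a_n\geq 1$ and $M_2=1/c$ from Proposition \ref{prop1} applied to the conjugate expansion. The only difference is cosmetic: you spell out the necessity direction (finitely many partial-quotient values give the lower bound, and Lemma \ref{lem!} identifies the limit of $\sigma(P)$), which the paper compresses into the remark that it ``follows naturally from Theorem \ref{thm>0}''.
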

\begin{proof}
   The proof of this theorem is straightforward. As for the first claim, the ``if'' part follows naturally from Theorem \ref{thm>0}. Now we assume that there exists $\alpha>0$ such that $\sigma(a_n)\geq\alpha>0$ for all $n\geq1$, and $\frac{\sigma(P_n)}{\sigma(Q_n)}$ converges to one of the roots of $\sigma(A)x^2+\sigma(B)x+\sigma(C)=0$, denoted by $\xi^\prime$.
By Proposition \ref{prop1} we have $|\xi^\prime \sigma(Q_n)-\sigma(P_n)|=\dfrac{1}{\xi_{n+1}^\prime \sigma(Q_n)+\sigma(Q_{n-1})}\leq \dfrac{1}{\sigma(Q_{n+1})}\leq\dfrac{1}{\sigma(a_{n+1})\sigma(Q_n)}$. Therefore, we can set $M_2=\dfrac{1}{\alpha}$ to demonstrate the boundedness of $F_2(n)$ and apply Theorem \ref{thm>0} again.\\
As for the second claim, Khinchin in \cite{khinchin} illustrated that, ultimately periodic expansions $[a_0,a_1,\cdots]$ with $\sigma(a_n)>0$ will lead to the divergence of $\sum_{n=1}^{\infty} \sigma\left(a_n\right)$. Consequently, a subsequence in $\sigma\left(Q_n\right)$ with values approaching infinity will be obtained, which contradicts Corollary \ref{cor<0}.
\end{proof}
\begin{cor}\label{cor1}
Assume that the partial quotients are chosen such that $\sigma(a_n)>0$ for all $n\geq1$. If no roots of the equation $\sigma(A)x^2+\sigma(B)x+\sigma(C)=0$ fall within the interval $(\sigma(a_0),\sigma(a_0)+\frac{1}{\sigma(a_1)})$, then the continued fraction expansion of $\xi$ over K starting with $[ \sigma(a_0), \sigma(a_1)]$ will not be ultimately periodic.
\end{cor}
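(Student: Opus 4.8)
The plan is to argue by contradiction. Write $P=[a_0,a_1,\dots]$ for the given expansion converging to $\xi$, and let $\sigma(P)=[\sigma(a_0),\sigma(a_1),\dots]$ denote its image under the coefficient-wise action of $\sigma$; this is precisely the expansion ``starting with $[\sigma(a_0),\sigma(a_1)]$'' in the statement. Suppose, contrary to the claim, that $\sigma(P)$ is ultimately periodic. I will show that its limit must then be a root of $\sigma(A)x^2+\sigma(B)x+\sigma(C)=0$ lying in $(\sigma(a_0),\sigma(a_0)+\frac{1}{\sigma(a_1)})$, contradicting the hypothesis.

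First I would establish convergence of $\sigma(P)$. Since $\sigma(a_n)>0$ for all $n\ge 1$ and $\sigma(P)$ is ultimately periodic, the series $\sum_{n\ge 1}\sigma(a_n)$ diverges (each full period contributes the same strictly positive amount), so by the convergence criterion recalled just after the definition of the $\mathcal{Q}$-pair, $\sigma(P)$ converges to some real number $\xi'$, with $\sigma(Q_n)\to\infty$. A key preliminary observation is that the $\mathcal{Q}$-pair of $\sigma(P)$ is exactly $(\sigma(P_n),\sigma(Q_n))$, which follows by induction from the recurrences because $\sigma$ is a ring homomorphism.

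Next I would identify $\xi'$. Applying $\sigma$ entrywise and using that $\sigma$ respects matrix products gives $E(\sigma(P))=\sigma(E(P))$, hence $f(\sigma(P))=\sigma(f(P))$ and $\Delta(f(\sigma(P)))=\sigma(B^2-4AC)$, exactly as in the proof of Theorem \ref{thm>0}. As $\sigma(P)$ converges, Lemma \ref{lem!} forces $\sigma(B^2-4AC)>0$ (a negative discriminant would contradict convergence, and it cannot vanish since $B^2-4AC$, and therefore its conjugate, is not a square in $\mathcal{O}_K$); moreover $\xi'$ is then a root of $f(\sigma(P))=0$, i.e. of $\sigma(A)x^2+\sigma(B)x+\sigma(C)=0$. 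In the remaining case $\sigma(B^2-4AC)<0$ the conjugate polynomial has no real root, so the hypothesis holds vacuously, while the same clash between convergence-by-positivity and Lemma \ref{lem!} already rules out periodicity.

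Finally I would localize $\xi'$. Because $\sigma(a_n)>0$ for $n\ge 1$, every $\sigma(Q_n)$ is positive, so by Proposition \ref{prop1} the convergents of $\sigma(P)$ interleave in the usual way: the even-indexed convergents increase and the odd-indexed ones decrease to $\xi'$, giving $\frac{\sigma(P_0)}{\sigma(Q_0)}<\xi'<\frac{\sigma(P_1)}{\sigma(Q_1)}$, that is $\xi'\in(\sigma(a_0),\sigma(a_0)+\frac{1}{\sigma(a_1)})$. This contradicts the assumption that no root of $\sigma(A)x^2+\sigma(B)x+\sigma(C)=0$ lies in that interval, and the corollary follows. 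I expect the middle step to be the main obstacle: one must reconcile the convergence guaranteed by positivity of the $\sigma(a_n)$ with the discriminant-based convergence test of Lemma \ref{lem!}, thereby pinning down the sign of $\sigma(B^2-4AC)$ and confirming that the limit is genuinely a root of the conjugate quadratic. The surrounding steps are standard facts about continued fractions with positive partial quotients.
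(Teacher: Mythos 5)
Your proof is correct, and it shares the paper's skeleton: localize the limit of the conjugate expansion $[\sigma(a_0),\sigma(a_1),\cdots]$ inside $(\sigma(a_0),\sigma(a_0)+\frac{1}{\sigma(a_1)})$ via positivity of the $\sigma(a_n)$, then contradict the hypothesis by showing periodicity would force that limit to be a root of $\sigma(A)x^2+\sigma(B)x+\sigma(C)=0$. Where you genuinely diverge is in how the middle implication is justified. The paper's proof is a two-line appeal to Theorem \ref{thmconstraints}, whose necessity direction rests on Theorem \ref{thm>0} and hence ultimately on the height machinery of Theorem \ref{thmbound}; you bypass that entire chain and re-derive the implication from more elementary ingredients: Khinchin's convergence criterion (periodicity plus $\sigma(a_n)>0$ makes $\sum\sigma(a_n)$ diverge, so $\sigma(P)$ converges and $\sigma(Q_n)\to\infty$), the conjugation identity $E(\sigma(P))=\sigma(E(P))$, and Lemma \ref{lem!} together with Algorithm \ref{alg1} to identify the limit as a root of $f(\sigma(P))=\sigma(f(P))$, which is proportional to the conjugate quadratic since $\xi$ is a root of $f(P)$. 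This buys a self-contained argument that avoids the Weil-height apparatus altogether, and it also disposes explicitly of the case $\sigma(B^2-4AC)<0$ (hypothesis vacuous, and periodicity impossible because Lemma \ref{lem!} would contradict the convergence forced by positivity), which the paper gets for free from the second bullet of Theorem \ref{thmconstraints}. What the paper's route buys in exchange is brevity and reuse of already-proved results; yours is longer but makes the corollary independent of Theorems \ref{thmbound}--\ref{thm>0}. The interleaving argument for the localization (all $\sigma(Q_n)>0$, even convergents increase, odd ones decrease) is the standard fact the paper invokes implicitly, so no gap there.
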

\begin{proof}
  Given that all $\sigma(a_i)$ are positive, we can conclude that the limit of any convergent continued fraction expansion, $[\sigma(a_0),\sigma(a_1),\cdots]$, falls within the interval $(\sigma(a_0),\sigma(a_0)+\frac{1}{\sigma(a_1)})$. Hence the statement can be proved directly using Theorem \ref{thmconstraints}.
\end{proof}
\section{K=$\mathbb{Q}(\sqrt{5})$: An Explicit Example}\label{sec4}
In this section, we take $\beta=\frac{\sqrt{5}+1}{2}$, $K=\mathbb{Q}(\beta)=\mathbb{Q}(\sqrt{5}) $, then $\mathcal{O}_K=\mathbb{Z}[\beta] $. The Galois group $\rm{Gal}(K/\mathbb{Q} )=\{1,\sigma\}$, where $\sigma$ maps $\beta$ to its algebraic conjugate $-\dfrac{1}{\beta}$. We will present a continued fraction 
 expansion algorithm such that for a given element $\xi$, the algorithm generates a continued fraction expansion with partial quotients in $\mathcal{O}_K$ that converges to $\xi$, and it is applicable to all real quartic irrationals $\xi$ which are quadratic over K and have real algebraic conjugates. Moreover, this continued fraction expansion of $\xi$ will be ultimately periodic.\\
 \begin{figure}[H]
\centering
\includegraphics[width=0.45\textwidth]{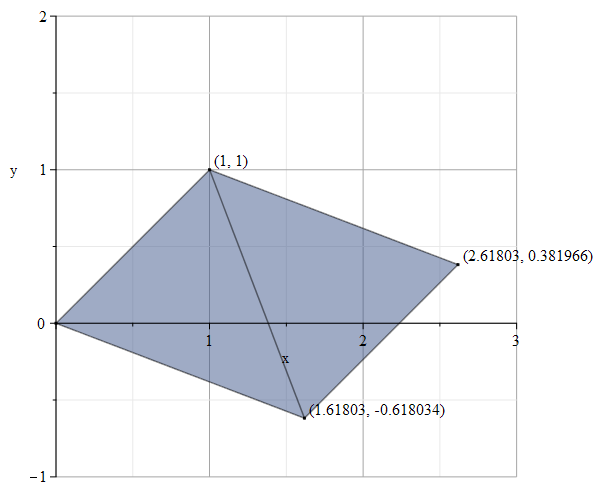}
\caption{The fundamental parallelepiped of $v(\mathcal{O}_K)$}
\label{lattice}
\end{figure}
Now we consider the function $v:K\rightarrow \mathbb{R}^2 $ defined by 
$$v(x)=(x,\sigma(x)),$$
it is clear that $v$ is an additive map.
Recall that an abelian subgroup $\mathcal{L}$ of a real vector space $V$ is called a lattice if $\mathcal{L}=\mathbf{Z} v_1+\cdots+\mathbf{Z} v_r$ for some linearly independent vectors $v_1, \ldots, v_r$, and we say $\mathcal{L}$ is a full lattice if $r$ is the dimension of $V$ over $\mathbb{R}$. If $\mathcal{L}$ is such a full lattice with basis $v_1, \ldots, v_n$, then the set
$$
T=\left\{r_1 v_1+\cdots+r_n v_n: 0 \leq r_i<1, \quad 1 \leq i \leq n\right\}
$$
is called a fundamental parallelepiped for $\mathcal{L}$. From the definition of the map $v$ we know that $v(\mathcal{O}_K)$ is a full lattice in $\mathbb{R}^2$ with basis $v(1)=(1,1)$ and $v(\beta)=(\beta,-\dfrac{1}{\beta})$, and the fundamental parallelepiped of $v(\mathcal{O}_K)$ is shown in Figure \ref{lattice}. Further information regarding the lattice theory of algebraic number rings can be found in \cite{Janusz}.\\
Consider the triangle formed by the points $(0,0),(1,1)$ and $(\beta,\dfrac{1}{\beta})$. By calculating the coordinates of the circumcenter of this triangle, we find that it is located at $(\dfrac{3}{10}+\dfrac{2}{5}\beta,\dfrac{7}{10}-\dfrac{2}{5}\beta)$, while the radius of the circumcircle is $\sqrt{\frac{9}{10}}$, which is less than 1.\\
Let $\xi$ be any real quartic irrational belonging to a quadratic extension of $K$ whose conjugates are all real. Assume the minimal polynomial of $\xi$ is $(Ax^2+Bx+C)\cdot(\sigma(A)x^2+\sigma(B)x+\sigma(C))=0$. We arbitrarily select a root of $\sigma(A)x^2+\sigma(B)x+\sigma(C)=0$ and denote it by $\xi^\prime$. Define the vector $\boldsymbol{\Xi}=(\xi,\xi^\prime)$, one can express it as a sum of a lattice vector and a vector in the fundamental parallelepiped in polynomial time. Additionally, the latter vector can be modified to have length is less than $\sqrt{\frac{9}{10}}$. The details regarding this part can be found in the algorithm below. In other words, we can express $\boldsymbol{\Xi}=(a_0,\sigma(a_0))+\boldsymbol{\Xi^\prime}$, where $(a_0,\sigma(a_0))\in v(\mathcal{O}_K )$ and $|\boldsymbol{\Xi}|<\sqrt{\frac{9}{10}}<1$.\\
Based on the above discussion, for such a given $\xi$ and $\xi^\prime$, we define the continued fraction expansion algorithm for $\xi$:
\begin{algo}\label{alg2} The continued fraction expansion for pair $(\xi,\xi^\prime)$:
    \begin{enumerate}
    \item $\xi_0:=\xi$, $\xi^\prime_0:=\xi^\prime$, $\boldsymbol{\Xi_0}:=(\xi_0,\xi^\prime_0)$.
    
    \item For all $n\geq0$, find $\boldsymbol{a_n}:=(a_n,\sigma(a_n))\in v(\mathcal{O}_K)$ such that $|\boldsymbol{\Xi_n}-\boldsymbol{a_n}|<\sqrt{\frac{9}{10}}$. This can be achieved by performing the following calculation:
    \begin{enumerate}
        \item Solve the equation 
        \begin{equation*}
\begin{bmatrix}
1 & \beta \\
1 & -\frac{1}{\beta} \\
\end{bmatrix}
\begin{bmatrix}
\Tilde{x_n} \\
\Tilde{y_n} \\
\end{bmatrix}
=
\begin{bmatrix}
\xi_n \\
\xi_n^\prime \\
\end{bmatrix}
\end{equation*} 
\item Choose $x_n,y_n\in\mathbb{Z}$ such that $0<|x_n-\Tilde{x_n}|<1,\ 0<|y_n-\Tilde{y_n}|<1$ and $|x_n+y_n\beta-\xi_n|^2+|x_n-y_n\frac{1}{\beta}-\xi_n^\prime|^2<\frac{9}{10}$. One can find a satisfying $(x_n,y_n)$ after at most four attempts. Set $a_n=x_n+y_n\beta$.
    \end{enumerate}
    Note that the choices may not be unique one can choose one arbitrarily. This process can be completed in polynomial time.
    
    \item $\xi_{n+1}:=\dfrac{1}{\xi_n-a_n}$, $\xi^\prime_{n+1}:=\dfrac{1}{\xi^\prime_{n+1}-\sigma(a_n)}$,  $\boldsymbol{\Xi_{n+1}}:=(\xi_{n+1},\xi^\prime_{n+1})$.
\end{enumerate}
\end{algo}

Then we obtain an infinite continued fraction expansion of $\xi$ by using the aforementioned algorithm, which generates the sequence $[a_0,a_1,\cdots]$, with all partial quotients belong to $\mathcal{O}_K$. It can be seen that both $|\xi_n|$ and $|\xi_n^\prime|$ are larger than $\sqrt{\frac{10}{9}}$.\\
The following lemma is derived from Corollary 4.5 discussed in \cite{dani}, which pertains to the properties of complex continued fractions with the proof remains essentially unchanged.
\begin{lem}\label{lemeg}
    Suppose $\xi$ as above is a real quartic irrational contained in some quadratic extension of K whose conjugate roots are all real, and the continued fraction expansion of $\xi$ over K is given by our defined algorithm with respect to a chosen conjugate root $\xi^\prime$. Apply Algorithm \ref{alg2} on the pair $(\xi,\xi^\prime)$. Let 
 $\gamma=\sqrt{\frac{10}{9}}$. Denote $N:=\{n\in\mathbb{N}\ |\ |Q_{n}|>|Q_{n-1}|\}$. If $N$ is not empty, say the minimal element in $N$ is $n_0$, then for all $n\geq n_0$, we have $$|Q_n\xi-P_n|<\dfrac{(\gamma-1)^{-1}}{|Q_n|}.$$
 Similarly for $\xi^\prime$, we can define $N^\prime:=\{n\in\mathbb{N}\ |\ |\sigma(Q_{n})|>|\sigma(Q_{n-1})|\}$. If $N^\prime$ is not empty with minimal element $n_0^\prime$, then for all $n\geq n_0^\prime$, we have 
 $$|\sigma(Q_n)\xi^\prime-\sigma(P_n)|<\dfrac{(\gamma-1)^{-1}}{|\sigma(Q_n)|}.$$
\end{lem}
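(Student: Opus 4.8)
The plan is to reduce the two claimed inequalities to a single uniform bound on the quantity $T_n := Q_n(\xi Q_n - P_n) = Q_n S_n$, namely $|T_n| < (\gamma-1)^{-1}$ for all $n \geq n_0$; the statement for $\xi'$ then follows verbatim after applying $\sigma$ to the whole expansion, since by construction $|\xi_n'| > \gamma$ as well. The engine of the argument is a short recursive identity for $T_n$. Starting from $\xi_{n+1} S_n = -S_{n-1}$ (read off from the formula $\xi_{n+1} = -(\xi Q_{n-1}-P_{n-1})/(\xi Q_n - P_n)$ in the preliminaries), the Wronskian-type identity $Q_n S_{n-1} - Q_{n-1}S_n = (-1)^{n+1}$ (a direct consequence of Proposition~\ref{prop1}(1)), the three-term recurrence $Q_n = a_n Q_{n-1} + Q_{n-2}$, and the algorithmic identity $\xi_n - a_n = 1/\xi_{n+1}$, I would substitute and simplify to obtain
$$T_n = \frac{T_{n-1}}{\xi_{n+1}^2} + \frac{(-1)^n}{\xi_{n+1}}.$$
Deriving this identity cleanly, keeping the signs and the factor $\xi_{n+1}^2$ correct, is the one genuinely computational step, and I expect it to be the main place where care is needed.

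Granting the identity, the rest is an induction on $n \geq n_0$ with contraction constant $1/\gamma^2$. Since every complete quotient satisfies $|\xi_{n+1}| > \gamma$ (this is exactly the output guarantee $|\xi_n - a_n| < \sqrt{9/10} = \gamma^{-1}$ of Algorithm~\ref{alg2}), taking absolute values gives $|T_n| < |T_{n-1}|/\gamma^2 + 1/\gamma$. For the base case I would use Proposition~\ref{prop1}(2) to write $|T_{n_0}| = 1/|\xi_{n_0+1} + Q_{n_0-1}/Q_{n_0}|$; because $n_0 \in N$ we have $|Q_{n_0-1}/Q_{n_0}| < 1$, so with $|\xi_{n_0+1}| > \gamma$ the reverse triangle inequality yields $|\xi_{n_0+1} + Q_{n_0-1}/Q_{n_0}| > \gamma - 1$, hence $|T_{n_0}| < (\gamma-1)^{-1} =: M$.

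For the inductive step it suffices to check that the affine map $t \mapsto t/\gamma^2 + 1/\gamma$ sends $[0,M]$ into itself; indeed $M - (M/\gamma^2 + 1/\gamma) = \gamma^{-2} > 0$ when $M = (\gamma-1)^{-1}$, so $|T_{n-1}| < M$ forces $|T_n| < M$. This propagates the bound from $n_0$ to all larger $n$, and unwinding $|T_n| = |Q_n|\,|\xi Q_n - P_n|$ gives $|\xi Q_n - P_n| < (\gamma-1)^{-1}/|Q_n|$. Applying $\sigma$ then replaces $\xi, P_n, Q_n, \xi_{n+1}$ by $\xi', \sigma(P_n), \sigma(Q_n), \xi_{n+1}'$ and $N$ by $N'$, and since the two inputs used ($|\xi_n'| > \gamma$ and the $\mathcal{Q}$-pair recurrence, which commutes with $\sigma$) both persist, the identical induction starting at $n_0'$ delivers the second inequality. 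The one point I would double check is that no monotonicity of $|Q_n|$ is used beyond the single base-case hypothesis $n_0 \in N$: it is the contraction factor $\gamma^{-2}<1$, and not any growth of $Q_n$, that maintains the bound for all subsequent indices.
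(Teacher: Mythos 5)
Your proof is correct, but its engine is genuinely different from the paper's. The two arguments share the same base case: for $n_0\in N$, Proposition \ref{prop1} and the reverse triangle inequality give $|Q_{n_0}|\,|\xi Q_{n_0}-P_{n_0}|=|\xi_{n_0+1}+Q_{n_0-1}/Q_{n_0}|^{-1}<(\gamma-1)^{-1}$. They diverge at the inductive step. The paper splits into cases according to whether $n+1\in N$: if so, it simply reruns the base-case argument at $n+1$ (no inductive hypothesis needed); if not, then $|Q_{n+1}|\le |Q_n|$, and the strict decrease $|S_{n+1}|=|\xi_1\cdots\xi_{n+2}|^{-1}<|S_n|$ (item 3 of Proposition \ref{prop1} together with $|\xi_i|>\gamma>1$) yields $|S_{n+1}|<|S_n|<(\gamma-1)^{-1}/|Q_n|\le(\gamma-1)^{-1}/|Q_{n+1}|$. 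You instead derive the exact recursion $T_n=T_{n-1}/\xi_{n+1}^2+(-1)^n/\xi_{n+1}$ for $T_n=Q_nS_n$ --- which is correct: it follows, as you indicate, from $S_{n-1}=-\xi_{n+1}S_n$ and $Q_nS_{n-1}-Q_{n-1}S_n=(-1)^{n+1}$ --- and then run a case-free contraction argument using only $|\xi_{n+1}|>\gamma$ and the computation $M-(M/\gamma^2+1/\gamma)=\gamma^{-2}>0$ for $M=(\gamma-1)^{-1}$. Your closing check is also accurate: the hypothesis $n_0\in N$ enters only through the base case, and no further growth of $|Q_n|$ is used. As for what each route buys: the paper's proof needs no new identity and stays entirely within Proposition \ref{prop1}, at the price of the case analysis on membership in $N$; yours eliminates both the case analysis and the appeal to monotonicity of $|S_n|$, and it gives a sharper asymptotic conclusion for free, since $t\mapsto t/\gamma^2+1/\gamma$ is a contraction with fixed point $\gamma/(\gamma^2-1)=3\sqrt{10}\approx 9.49$, so your recursion actually shows $\limsup_n |Q_n|\,|\xi Q_n-P_n|\le 3\sqrt{10}$, well below $(\gamma-1)^{-1}\approx 18.5$. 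A further small advantage is that your invariant $|T_n|<M$ remains meaningful at any index where $Q_n=0$, where the lemma's displayed inequality is vacuous. The treatment of the conjugate statement --- apply $\sigma$, note that the algorithm guarantees $|\xi_n'|>\gamma$ and that all the identities commute with $\sigma$ --- is the same in both proofs.
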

\begin{proof}
 We prove the claim only for $\xi$ since the proof for $\xi^\prime$ is essentially identical.\\
 By the definition of our algorithm, we have $|\xi_n|> \gamma>1$ for all $n$. Thus if $n_0\in N\neq \emptyset$, then $|\xi_{n_0+1}+\dfrac{Q_{n_0}}{Q_{n_0-1}}|>|\xi_{n_0+1}|-|\dfrac{Q_{n_0}}{Q_{n_0-1}}|>(\gamma-1)^{-1}$, which implies that $|Q_{n_0}\xi-P_{n_0}|=|\dfrac{1}{\xi_{n_0+1}Q_{n_0}+Q_{n_0-1}}|<\dfrac{(\gamma-1)^{-1}}{|Q_{n_0}|}$. If $n_0+1\in N$, we can similarly deduce that $|Q_{n_0+1}\xi-P_{n_0+1}|<\dfrac{(\gamma-1)^{-1}}{|Q_{n_0+1}|}$. Otherwise if $n_0+1\notin N$, then $|Q_{n_0+1} |\leq|Q_{n_0}|$. Hence $|Q_{n_0+1}\xi-P_{n_0+1} |=|\xi_1\cdots\xi_{n_0+2}|^{-1}<|Q_{n_0}\xi-P_{n_0}|<\dfrac{(\gamma-1)^{-1}}{|Q_{n_0}|}\leq\dfrac{(\gamma-1)^{-1}}{|Q_{{n_0}+1}|}$. Therefore, by induction, we can conclude that for all $n\geq n_0$ we have $|Q_{n}\xi-P_{n}|<\dfrac{(\gamma-1)^{-1}}{|Q_{n}|}$.\\
\end{proof}
Now we demonstrate that this algorithm will generate an ultimately periodic continued fraction expansion that converges to $\xi$.
\begin{thm}\label{thmeg}
With all the assumptions in Lemma \ref{lemeg}, the continued fraction obtained by $(\xi,\xi^\prime)$ using Algorithm \ref{alg2} is ultimately periodic. Also, if we denote the expansion by $P$, then $P$ takes $\xi$ as the limit, while $\sigma(P)$ converges to the chosen $\xi^\prime$.
\end{thm}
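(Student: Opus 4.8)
The plan is to reduce everything to the sufficiency direction of Theorem \ref{thm>0}, whose hypotheses are exactly the two boundedness conditions that Lemma \ref{lemeg} is tailor-made to produce. First I would extract the two basic consequences of the construction. Step 2 of Algorithm \ref{alg2} forces $|\xi_n-a_n|^2+|\xi_n^\prime-\sigma(a_n)|^2<\frac{9}{10}$, so in particular $|\xi_n-a_n|<\sqrt{9/10}$ and $|\xi_n^\prime-\sigma(a_n)|<\sqrt{9/10}$; since $\xi_{n+1}=(\xi_n-a_n)^{-1}$ and $\xi_{n+1}^\prime=(\xi_n^\prime-\sigma(a_n))^{-1}$, this gives $|\xi_{n+1}|>\gamma$ and $|\xi_{n+1}^\prime|>\gamma$ for every $n$, with $\gamma=\sqrt{10/9}>1$. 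Moreover, because all four Galois conjugates of $\xi$ are real, $\sigma(B^2-4AC)>0$: it is the discriminant of $\sigma(A)x^2+\sigma(B)x+\sigma(C)$, which has the two distinct real roots $\xi^\prime$ and its $K$-conjugate. Thus the standing hypotheses of Theorem \ref{thm>0} are in force.

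Next I would record the exponential decay and deduce that $|Q_n|$ and $|\sigma(Q_n)|$ are unbounded. Applying the identity in Proposition \ref{prop1}(3) to $P$ and to the formal expansion $[\sigma(a_0),\sigma(a_1),\dots]$ (whose complete quotients are exactly the $\xi_n^\prime$, starting from $\xi^\prime$), I get $|\xi Q_n-P_n|=\prod_{i=1}^{n+1}|\xi_i|^{-1}<\gamma^{-(n+1)}$ and $|\xi^\prime\sigma(Q_n)-\sigma(P_n)|<\gamma^{-(n+1)}$, both tending to $0$; note these are purely algebraic identities and require no prior convergence. Writing $S_n=\xi Q_n-P_n$ and combining with $P_{n-1}Q_n-P_nQ_{n-1}=(-1)^n$ from Proposition \ref{prop1}(1) yields $S_nQ_{n-1}-S_{n-1}Q_n=(-1)^n$, whence $\max\{|Q_{n-1}|,|Q_n|\}\ge(|S_{n-1}|+|S_n|)^{-1}\to\infty$. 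If the set $N$ of Lemma \ref{lemeg} were empty, $|Q_n|$ would be non-increasing from $|Q_0|=1$ and hence bounded, a contradiction; so $N\neq\emptyset$, and the identical computation on the $\sigma$-side (using $\sigma(P_{n-1})\sigma(Q_n)-\sigma(P_n)\sigma(Q_{n-1})=(-1)^n$) shows $N^\prime\neq\emptyset$.

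With $N,N^\prime\neq\emptyset$, Lemma \ref{lemeg} now supplies indices $n_0,n_0^\prime$ with $|\xi Q_n-P_n|<(\gamma-1)^{-1}/|Q_n|$ for $n\ge n_0$ and $|\xi^\prime\sigma(Q_n)-\sigma(P_n)|<(\gamma-1)^{-1}/|\sigma(Q_n)|$ for $n\ge n_0^\prime$. Absorbing the finitely many initial terms into the constants, these are exactly the bounds $|\xi Q_n-P_n|\le M_1/|Q_n|$ and $|\xi^\prime\sigma(Q_n)-\sigma(P_n)|\le M_2/|\sigma(Q_n)|$ of Theorem \ref{thm>0}. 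Since $\sigma(B^2-4AC)>0$, the sufficiency direction of that theorem applies: the bounds force $|A_n|$ and $|\sigma(A_n)|$ to be bounded, so by Theorem \ref{thmbound} (via Northcott's theorem, Proposition \ref{prop4}) there are only finitely many complete quotients $\xi_n$. Fixing a deterministic rule for the rounding in Step 2, so that $a_n$ is a function of $\boldsymbol{\Xi_n}=(\xi_n,\xi_n^\prime)$ and $\xi_n^\prime$ is itself determined by $\xi_n$ through the fixed embedding of $L$, any repetition $\xi_m=\xi_{m+k}$ propagates through the recursion to give $a_m=a_{m+k}$ at all subsequent indices; hence $(a_n)$, and therefore $P$, is ultimately periodic.

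Finally I would read off the two limit statements. Once periodicity is known, Lemma \ref{lem@} applied to $P$ and to $\sigma(P)$ gives $|Q_n|\to\infty$ and $|\sigma(Q_n)|\to\infty$ (this is precisely the decoupling made explicit in Remark \ref{rmk>0}); combined with $|\xi Q_n-P_n|\to0$ and $|\xi^\prime\sigma(Q_n)-\sigma(P_n)|\to0$ this forces $P_n/Q_n\to\xi$ and $\sigma(P_n)/\sigma(Q_n)\to\xi^\prime$, i.e.\ $P$ converges to $\xi$ while $\sigma(P)$ converges to the chosen $\xi^\prime$. The step I expect to be the main obstacle is exactly the non-emptiness of $N$ and $N^\prime$: Lemma \ref{lemeg} is vacuous unless $|Q_n|$ (resp.\ $|\sigma(Q_n)|$) is unbounded, and a priori the exponential decay $S_n\to0$ alone does not prevent $|Q_n|$ from shrinking along a subsequence, so the limit $P_n/Q_n\to\xi$ cannot be read off before periodicity is established. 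The determinant identity $S_nQ_{n-1}-S_{n-1}Q_n=(-1)^n$ is what breaks this deadlock, and it is the hinge on which the whole argument turns.
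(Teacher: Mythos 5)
Your proposal is correct, and its backbone is the same as the paper's: extract $|\xi_n|,|\xi_n^\prime|>\gamma$ from Step 2 of Algorithm \ref{alg2}, deduce $|\xi Q_n-P_n|\to 0$ and $|\xi^\prime\sigma(Q_n)-\sigma(P_n)|\to 0$ from Proposition \ref{prop1}, feed the bounds of Lemma \ref{lemeg} into the sufficiency direction of Theorem \ref{thm>0} (hence Theorem \ref{thmbound}), and finally identify the limits of $P$ and $\sigma(P)$. Where you genuinely differ is the treatment of $N$ and $N^\prime$: the paper keeps the cases ``$N$ or $N^\prime$ empty'' and disposes of them separately (bounded $|Q_n|$, hence bounded $|P_n|$ and bounded $|A_n|$, then Theorem \ref{thmbound} directly), whereas you rule these cases out a priori via the identity $S_nQ_{n-1}-S_{n-1}Q_n=(-1)^n$, which together with $|S_n|+|S_{n-1}|\to 0$ forces $\max\{|Q_{n-1}|,|Q_n|\}\to\infty$, so $|Q_n|$ cannot be non-increasing and $N\neq\emptyset$ (same on the $\sigma$-side). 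This buys a one-case proof and, as a by-product, a direct, non-circular justification of the claim in Remark \ref{rmkeg}, which the paper can only obtain after periodicity is known, via Lemma \ref{lem@}. Your further insistence on a deterministic rounding rule in Step 2 --- so that $\xi_{n+1}$ is a function of $\xi_n$ and finiteness of the set of complete quotients genuinely implies periodicity of the sequence --- flags a real subtlety that the paper leaves implicit inside Theorem \ref{thmbound}; but if that theorem is taken as given (as the paper takes it), the restriction is unnecessary, and dropping it is what the statement requires, since Algorithm \ref{alg2} permits arbitrary choices. One small patch: Lemma \ref{lem@} has convergence of the periodic expansion as a hypothesis, so before invoking it you should, as the paper does, first cite Lemma \ref{lem!} (the associated quadratics have real roots lying in $L\setminus K$, so their discriminants are positive non-squares in $K$) to know that $P$ and $\sigma(P)$ converge at all.
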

\begin{proof}
   If both of the sets $N$ and $N^\prime$, as defined  above, are not empty, then Lemma \ref{lemeg} holds. From Proposition \ref{prop1} we have $|\xi Q_n-P_n|=|\xi_1\cdots\xi_{n+1}|^{-1}$, $|\xi^\prime\sigma(Q_n)-\sigma(P_n)|=|\xi_1^\prime\cdots\xi_{n+1}^\prime|^{-1}$, both tending to 0 as $n$ increases, given that $|\xi_i|,|\xi_i^\prime|>\sqrt{\frac{10}{9}}$. Thus according to Theorem \ref{thm>0}, the expansion is ultimately periodic. \\
   On the other hand, if $N$ is empty, then $|Q_n|$ is bounded, hence $|P_n|$ is also bounded, as $|Q_n\xi-P_n|=|\xi_1\cdots\xi_n|^{-1}\rightarrow0$. The similar result holds if $N^\prime$ is empty. Therefore, we always have bounded $|A_n|=|AP_n^2+BP_n+C_n|$ and $|\sigma(A_n)|=|\sigma(AP_n^2+BP_n+C_n)|$, and by Theorem \ref{thmbound} we conclude again that expansion is ultimately periodic.\\
Denote the obtained continued fraction expansion by $P$, then Lemma \ref{lem!} guarantees the convergence of $P$ and $\sigma(P)$, and by Lemma \ref{lem@}, both $|Q_n|,|\sigma(Q_n)|\to\infty$, hence by $|\xi Q_n-P_n|,\ |\xi^\prime\sigma(Q_n)-\sigma(P_n)|\to0$, we can establish the convergence of $P$ to $\xi$ and $\sigma(P)$ to $\xi^\prime$, as required in the theorem.
\end{proof}
\begin{rmk}\label{rmkeg}
   Although we have discussed the cases whether the set $N(N^\prime)$ is empty during the proof, it should be seen by Lemma \ref{lem@} neither of them can be empty.\\
   Our approach depends heavily on the property of the lattice $v(\mathcal{O}_K)$, considering the definition of the covering radius as the minimum radius that encompasses the closed circles centered at vertices covering the polygon. In fact, our method is only applicable to those $v(\mathcal{O}_K)$ with covering radius less than 1.\\
   Using the method in \cite{yupeng2012covering}, we can easily determine the covering radius $r$ of $v(\mathcal{O}_K)$
    for $K=\mathbb{Q}(\sqrt{D})$. When $D\equiv2,3\mod4$, $r=\sqrt{\dfrac{D+1}{2}}$, and for $D\equiv1\mod 4$, $r=\dfrac{1}{2\sqrt{2}}(\sqrt{D}+\dfrac{1}{\sqrt{D}})$. One can check that for $D\neq 5$ the covering radius is always larger than 1.
\end{rmk}
\begin{eg}
Let $ \xi = 1 + \sqrt{\beta^2 +1}$ and $\xi^\prime = 1 + \sqrt{\frac{1}{\beta^2}+ 1}$. Define $\boldsymbol{\Xi_0} = (\xi,\xi^\prime)$. Let $a_0 = 2$, and $\boldsymbol{\Xi_1} = (\dfrac{\sqrt{\beta^2+1}}{\beta^2},\beta^2+\sqrt{\beta^4+\beta^2})$. If we take $a_1=4-2\beta$, then $\boldsymbol{\Xi_2}=\boldsymbol{\Xi_0}$, hence we obtain a purely periodic continued fraction expansion of $\xi=\overline{[2,4-2\beta]}$ with period length 2. Alternatively, take $\xi^\prime=1-\sqrt{1+\frac{1}{\beta^2}}$, we will obtain another expansion of $\xi=[\beta^2,\overline{2\beta}]$ that is ultimately periodic with period length 1.
\end{eg}

\bibliographystyle{IEEEtran}
\bibliography{ref}

\end{document}